\numberwithin{equation}{section}
\newtheorem{Theorem}{Theorem}[section]
\newtheorem{Lemma}[Theorem]{Lemma}
\newtheorem{prop}[Theorem]{Proposition}
\def\D {D}
\def\H {{\mathcal H}}
\def\M {{\mathcal M}}
\def\N {{\mathcal N}}
\def\R {\mathbb{R}}
\def\Re {\mathfrak{Re\,}}
\def\eps{\varepsilon}
\def\e{{\rm e}}
\def\d{{\rm d}}
\def\i{{\rm i}}
\def \l {\langle}
\def \r {\rangle}
\def \and {{\qquad\text{and}\qquad}}
\title{\textbf{Exponential stability of Timoshenko-Gurtin-Pipkin systems with full thermal coupling}}
\author{
	\small {\bf Filippo Dell'Oro}\thanks{Corresponding author.
	 Email: \href{filippo.delloro@polimi.it}{ filippo.delloro@polimi.it.}}\\
	\small Politecnico di Milano, Dipartimento di Matematica, \\
	\small  Via Bonardi 9, 20133 Milano, Italy. \medskip
	\\
	\small {\bf Marcio A. Jorge Silva}\thanks{Supported by the CNPq, Grant \#301116/2019-9.}\\
	\small Department of Mathematics, State University of Londrina,\\
	\small  Londrina 86057-970, Paran\'a, Brazil. \medskip
	\\
	\small {\bf Sandro B. Pinheiro}\thanks{Supported by the CAPES, Finance Code 001.
	(Master and Ph.D. PICME Scholarships)
	}\\
	\small Department of Mathematics, State University of Maring\'a,\\
	\small Maring\'a 87020-900, Paran\'a,  Brazil.
}
\date{\vspace{-4ex}}
\begin{document}

\maketitle

\begin{abstract}
\noindent
We analyze the stability properties of a linear thermoelastic
Timoshenko-Gurtin-Pipkin system
with thermal coupling acting
on both the shear force and the bending moment. Under either the
mixed Dirichlet-Neumann or else the full Dirichlet boundary conditions,
we show that the associated solution semigroup
in the history space framework of Dafermos is
exponentially stable independently of the values of the structural parameters of the model.
\end{abstract}

\smallskip
{\small \noindent{\bf Keywords:} Timoshenko system; Gurtin-Pipkin law;
thermal coupling, exponential stability.}

\smallskip
{\small \noindent{\bf 2020 MSC:} 35B40; 45K05; 47D03; 74D05.}

\section{Introduction}\label{sec-intro}
\subsection{The model}

The vibrations of a Timoshenko beam of length $L>0$ are described by the linear evolution PDE system
\cite{timoshenko,timoshenko-1}
\begin{equation*}
\left\{\begin{array}{lcl}
\rho_1\varphi_{tt}-S_x =0,\smallskip\\
\rho_2\psi_{tt}-M_x+S =0,
\end{array} \right.
\end{equation*}
where $\varphi=\varphi(x,t)$ and $\psi=\psi(x,t)$ are functions
of the space-time variable $(x,t)\in (0,L) \times (0,\infty)$
and represent the vertical displacement
and the rotation angle of the cross-section of the beam, respectively.
The constants $\rho_{1},\rho_{2}>0$ are physical parameters of the model,
while $S$ and $M$ stand for the
shear force and the bending moment, respectively.
When the beam is subject to an unknown temperature distribution, one may assume that $S$ and $M$
satisfy the constitutive laws (see \cite{alvesetal-JEE})
\begin{equation*}
\left\{\begin{array}{lcl}
S =  k (\varphi_{x} +  \psi )  -   \gamma  \theta, \smallskip\\
M = b  \psi_{x}  -  \sigma\xi,
\end{array} \right.
\end{equation*}
where $\theta=\theta(x,t)$ and $\xi=\xi(x,t)$ represent the temperature
(deviations from a constant reference temperature)
along the longitudinal and vertical directions, respectively, and $k,b,\gamma,\sigma>0$ are further
physical parameters. To complete the picture, we need to consider
two additional equations describing the evolution
of $\theta$ and $\xi$. Here,
we employ the Gurtin-Pipkin thermal laws
\cite{gurtin-pipkin}
\begin{equation}\label{GP}
\left\{\begin{array}{lcl}
\displaystyle\rho_3 \theta_{t} -\varpi_1\int_{0}^{\infty} g(s) \theta_{xx}(t-s) \mathrm{d} s
+ \gamma (\varphi_{x} + \psi)_t = 0,\medskip\\
\displaystyle	\rho_4 \xi_{t} -
\varpi_2\int_{0}^{\infty} h(s) \xi_{xx}(t-s) \mathrm{d} s + \sigma \psi_{xt} = 0,
\end{array} \right.
\end{equation}
where $\rho_3,\rho_4,\varpi_1,\varpi_2>0$ are physical parameters and
the convolutions kernels $g,h:[0, \infty)\to[0, \infty)$ are convex integrable functions of unit total
mass, whose precise properties will be specified later.
The values of $\theta$ and $\xi$ for negative times are regarded as initial data of the problem.
Accordingly, we end up with the
following thermoelastic Timoshenko-Gurtin-Pipkin beam system with thermal coupling acting
on both the shear force and the bending moment
\begin{equation}\label{tim4-beg}
\left\{\begin{array}{lcl}
\rho_{1} \varphi_{tt} -  k (\varphi_{x} + \psi)_{x} + \gamma \theta_x = 0,\bigskip \\
\rho_{2} \psi_{tt} - b\psi_{xx} + k(\varphi_{x} + \psi) -\gamma \theta + \sigma \xi_x = 0,
\medskip\smallskip \\
\displaystyle	\rho_3 \theta_{t} -\varpi_1\int_{0}^{\infty} g(s) \theta_{xx}(t-s) \mathrm{d} s
+ \gamma (\varphi_{x} + \psi)_t= 0,
\medskip \\
\displaystyle	\rho_4 \xi_{t} -\varpi_2\int_{0}^{\infty} h(s) \xi_{xx}(t-s) \mathrm{d} s + \sigma \psi_{xt}= 0,
\end{array} \right.
\end{equation}
complemented with the initial conditions
\begin{equation}\label{init-data}
\left\{\begin{array}{lcl}
\varphi(x,0)= \varphi_0(x),\quad \varphi_t(x,0)= \Phi_0(x),\quad
\psi(x,0)= \psi_0(x),\quad \psi_t(x,0)= \Psi_0(x), \smallskip\\
\theta(x,0)=\theta_0(x),\quad \theta(x,-s)|_{s>0}=p_0(x,s),\quad
\xi(x,0)=\xi_0(x),\quad \xi(x,-s)|_{s>0}=q_0(x,s),
\end{array} \right.
\end{equation}
where $\varphi_0,\Phi_0,\psi_0,\Psi_0,\theta_0,p_0,\xi_0,q_0$ are prescribed data.
We consider either the mixed Dirichlet-Neumann boundary conditions
\begin{equation}
\label{BCM-beg}
\varphi(0,t) = \varphi(L,t) = \psi_x(0,t) = \psi_x(L,t) = \theta_x(0,t) = \theta_x(L,t) = \xi(0,t) = \xi(L,t)=0,
\end{equation}
or else the full Dirichlet boundary conditions
\begin{equation}
\label{BCD-beg}
\varphi(0,t) = \varphi(L,t) = \psi(0,t) = \psi(L,t) = \theta(0,t) = \theta(L,t) = \xi(0,t) = \xi(L,t)=0.
\end{equation}
As detailed in the sequel, the treatment of the boundary conditions \eqref{BCD-beg} is harder than
\eqref{BCM-beg} and constitutes one of the main challenges of the article.

The aim of the present paper is to study the asymptotic properties of the solution semigroup $S(t)$
associated with \eqref{tim4-beg}-\eqref{BCD-beg} in the history space framework of Dafermos \cite{DAF}.
Before describing our main results,
we briefly summarize some recent achievements on related models where different
thermal laws have been employed.

\subsection{The Fourier law}

When the Gurtin-Pipkin laws \eqref{GP} are replaced by the classical Fourier ones
\begin{equation}
\label{sysfou}
\left\{\begin{array}{lcl}
\rho_3 \theta_{t} -\varpi_1 \theta_{xx}
+ \gamma (\varphi_{x} + \psi)_t = 0,\\\noalign{\vskip0.7mm}
\displaystyle	\rho_4 \xi_{t} -
\varpi_2\xi_{xx}+ \sigma \psi_{xt} = 0,
\end{array} \right.
\end{equation}
we obtain the so-called Timoshenko-Fourier system with full thermal coupling, whose
stability properties have been recently studied in
\cite{alvesetal-JEE}. In that paper, for a wide range of boundary conditions including
\eqref{BCM-beg} and \eqref{BCD-beg}, it is shown that the associated solution semigroup is exponentially
stable independently of the values of the structural parameters of the model. The main reason why no constraints
on the coefficients are needed to get exponential stability lies in the fact that
the system is {\it fully} damped, i.e.\ all the variables in play are effectively damped via the thermal dissipation.
Instead, when the system is only partially damped (i.e.\
the effects of either $\theta$ or else $\xi$ are neglected)
exponential stability occurs only within
the equal wave speed assumption $\rho_1 b  = \rho_2 k$ (see \cite{ASR,CARD,RR}).

\subsection{The Cattaneo law}

As is well-known, the Fourier heat conduction law has a parabolic character
and predicts that thermal signals propagate with an infinite
speed (see e.g.\ \cite{CJ}). In order to correct this unphysical phenomenon, several alternative theories
have been proposed. One of them is due to Cattaneo \cite{CATT} and consists in introducing
a (small) thermal relaxation parameter allowing to make
the resulting equation hyperbolic. Considering the Cattaneo
law in our model means replacing \eqref{GP} with
\begin{equation}
\label{catta}
\left\{\begin{array}{lcl}
\rho_3 \theta_t  + q_x+ \gamma (\varphi_{x} + \psi)_t =0,\\\noalign{\vskip0.7mm}
\tau q_t + q + \varpi_1 \theta_x=0,\\\noalign{\vskip0.7mm}
\rho_4 \xi_t + p_x + \sigma \psi_{xt}=0,\\\noalign{\vskip0.7mm}
\varsigma p_t + p + \varpi_2 \xi_x=0,
\end{array} \right.
\end{equation}
where $q=q(x,t)$ and $p=p(x,t)$ are the so-called heat-flux variables and
$\tau,\varsigma>0$ represent the aforementioned thermal relaxation parameters. Note that the system
above reduces to \eqref{sysfou} in the limit situation when $\tau=\varsigma =0$.
The stability properties of the resulting Timoshenko-Cattaneo model with full
thermal coupling have been recently analyzed in
\cite{DJ}, where it is proved that
the associated solution semigroup is exponentially
stable independently of the values of the structural parameters.
As in the Fourier case, this happens because
the system is fully damped, and indeed when the effects of either $\theta$ or else $\xi$
are neglected exponential stability holds only within appropriate conditions that somehow
generalize the equal wave speed assumption (see \cite{SJR}).

\subsection{Our results}
As our main result, we show that the semigroup $S(t)$ associated with \eqref{tim4-beg}-\eqref{BCD-beg}
is exponentially stable
independently of the values of the structural parameters of the model. Since the Cattaneo
law can be seen as the particular instance of the Gurtin-Pipkin one corresponding to the choices
$$g(s)=\frac{1}{\tau}\e^{-\frac{s}{\tau}}\qquad \text{and} \qquad
h(s)= \frac{1}{\varsigma}\e^{-\frac{s}{\varsigma}},$$
the exponential stability of the Timoshenko-Cattaneo system follows as a special case
(see the final Section \ref{finsec} for more details). Even more so,
the Timoshenko-Fourier system can be recovered from the
Timoshenko-Gurtin-Pipkin one through a proper singular limit procedure,
where the kernels $g$ and $h$ collapse into the Dirac mass at zero (see again
Section \ref{finsec} for more details).

As in the Fourier and the Cattaneo cases, the fact that system \eqref{tim4-beg} is fully damped
allows us to achieve the exponential stability without any restriction on the structural parameters
of the model,
contrarily to what happens in the partially damped situation where appropriate
stability conditions are needed (see \cite{delloro-pata}).
Still, the main challenge encountered in our analysis is connected to the treatment of the full
Dirichlet boundary conditions \eqref{BCD-beg} which produce some
``pointwise" boundary terms in the estimates.
Such terms have been handled in \cite{alvesetal-JEE} by means of a general observability inequality
recently established in \cite{AL,MUAV}, combined with some localized estimates obtained
by means of appropriate cut-off functions. This method heavily relies on the
regularization of the temperature variables provided by the parabolicity of the heat equation,
and thus cannot be applied to
\eqref{tim4-beg}-\eqref{BCD-beg} due to the  hyperbolic character of the Gurtin-Pipkin thermal law.
Hence, specific observability-type inequalities are needed to treat our problem.

\subsection{Plan of the paper}
In the forthcoming Section \ref{S2} we introduce the functional setting and the notation,
while in the subsequent Section \ref{semsec} we deal with the existence of the solution semigroup
$S(t)$. In Section \ref{MAINSEC} we state and prove the main result of the article.
The final Section \ref{finsec} is devoted to some concluding remarks.


\section{Functional Setting and Notation}
\label{S2}

We denote by $\R^+=(0,\infty)$
the positive half-line and by
$\i\R$ the imaginary axis in the complex plane.
The symbols $L^2, H^1,H_0^1$ and $H^2$ denote the standard (complex)
Lebesgue and Sobolev spaces on $(0,L)$, while
$\langle\cdot,\cdot\rangle$ denotes the
standard inner product on $L^2$, with associated norm $\|\cdot\|$.
We also introduce the spaces
$$
L^2_*=\big\{ f\in L^2 : \int_0^L f(x) \d x = 0\big\}\qquad \text{and}\qquad
H^1_* = H^1\cap L^2_*,
$$
the latter equipped with the gradient norm.
Concerning the convolution kernels $g$ and $h$, we suppose that for $s\geq0$
$$
g(s) = \int_s^\infty \mu(r) \d r,\qquad h(s) = \int_s^\infty \nu(r) \d r,
$$
where the so-called memory kernels $\mu,\nu:\R^+\to[0,\infty)$ are
non-increasing absolutely continuous functions, possibly unbounded near zero. Note that
$\mu$ and $\nu$ are integrable with total mass $g(0)$ and $h(0)$, respectively,
and that are differentiable almost everywhere with non-positive derivative.
They are also required to satisfy the conditions
\begin{align}
\label{assdafmu}
\mu'(s) + \delta_1\,\mu(s) &\leq 0,\\
\label{assdafnu}
\nu'(s) + \delta_2\,\nu(s) &\leq 0,
\end{align}
for some $\delta_1,\delta_2>0$ and almost every $s>0$.
Next, we introduce the
so-called memory spaces
\begin{align*}
&\M = \begin{cases} L^2_{\mu}(\R^+; H^1_*)\qquad \text{(b.c.\ \eqref{BCM-beg})}\\
L^2_{\mu}(\R^+; H^1_0) \qquad \text{(b.c.\ \eqref{BCD-beg})}\end{cases}\qquad\text{and}
\qquad \N = L^2_{\nu}(\R^+; H^1_0)
\end{align*}
endowed with the inner products
$$
\langle\eta_1,\eta_2\rangle_{\M}=\int_0^\infty \mu(s) \langle\eta_{1x}(s),\eta_{2x}(s)\rangle \d s,\qquad
\langle\zeta_1,\zeta_2\rangle_{\N}=\int_0^\infty \nu(s) \langle\zeta_{1x}(s),\zeta_{2x}(s)\rangle \d s.
$$
The induced norms will be denoted by $\|\cdot\|_{\M}$ and $\|\cdot\|_{\N}$, respectively.
Finally, we define the state space
$$
\H = \begin{cases}
H_0^1 \times L^2 \times  H_*^1 \times L^2_*
\times L^2_* \times \M \times L^2 \times \N\qquad \text{(b.c.\ \eqref{BCM-beg})}\\
 H_0^1 \times L^2 \times  H_0^1 \times L^2
\times L^2 \times \M \times L^2 \times \N \qquad \text{(b.c.\ \eqref{BCD-beg})}
\end{cases}
$$
equipped with the inner product
\begin{align*}
\langle U, \tilde U \rangle_{\cal H} &= k \langle \varphi_x+\psi, \tilde{\varphi}_x+\tilde{\psi} \rangle + \rho_1 \langle \Phi, \tilde{\Phi} \rangle + b\langle \psi_x , \tilde{\psi}_x \rangle + \rho_2\langle \Psi, \tilde{\Psi} \rangle \\
&\quad + \rho_3\langle \theta, \tilde{\theta} \rangle+ \varpi_1 \langle \eta,\tilde{\eta} \rangle_{\cal M} + \rho_4 \langle \xi, \tilde{\xi} \rangle + \varpi_2\langle \zeta, \tilde{\zeta} \rangle_{\cal N}
\end{align*}
for every $U = (\varphi,\Phi,\psi,\Psi,\theta,\eta,\xi,\zeta) \mbox{ and } \tilde U= (\tilde{\varphi},
\tilde{\Phi},\tilde{\psi},\tilde{\Psi},\tilde{\theta},\tilde{\eta},\tilde{\xi},\tilde{\zeta})$ belonging to $\H$.
The induced
norm, equivalent to the standard product norm, will be denoted by
$\| \cdot\|_\H$ and reads
$$
\| U\|_\H^2 =
k \|\varphi_x+\psi\|^2 + \rho_1 \| \Phi \|^2
+ b\|\psi_x\|^2 + \rho_2 \|\Psi\|^2
+ \rho_3\|\theta\|^2 +
\varpi_1 \|\eta\|^2_{\cal M} + \rho_4 \| \xi\|^2 +
\varpi_2\|\zeta\|_{\cal N}.
$$

\noindent
{\bf A word of warning.}
{\it Along the paper, we will make use of the Young, H\"older
and Poincar\'e inequalities without explicit mention.
We will also tacitly employ the equivalence between the norm $\| \cdot\|_\H$
and the standard product norm on the space $\H$.}


\section{The Semigroup}
\label{semsec}
We consider the infinitesimal generator $T$ of the right-translation semigroup on $\M$,
that is, the linear operator
$$T \eta=-\eta' \qquad\, \text{with domain}\qquad\, \D(T)=\big\{\eta\in{\M}:\eta'\in\M\,\,\,\text{and}\,\,\,
\lim_{s\to 0}\|\eta_x(s)\|=0\big\},$$
where $\eta'$ is the weak derivative with respect to $s\in\R^+$. We will also consider
the infinitesimal generator of the right-translation semigroup on $\N$, denoted again by
$T$ and defined in exactly the same way.
Calling for every $\eta,\zeta \in \D(T)$
$$
\Gamma[\eta] = \int_0^\infty -\mu'(s) \|\eta_x(s)\|^2 \d s \qquad \text{and} \qquad
\Gamma[\zeta] = \int_0^\infty -\nu'(s) \|\zeta_x(s)\|^2 \d s,
$$
we have the equalities (see e.g.\ \cite{Terreni})
\begin{align}
\label{Teta}
\Re \l T\eta,\eta\r_\M
= -\frac12 \Gamma[\eta] &\leq0,\\\noalign{\vskip1mm}
\label{Tzeta}
\Re \l T\zeta,\zeta\r_\N = -\frac12 \Gamma[\zeta] &\leq0.
\end{align}
Next, in the same spirit of \cite{DAF},
we define for $s>0$ the auxiliary variables
$$
\eta^t(x,s)=\int_{0}^s \theta(x,t-r)\d r\qquad \text{and} \qquad
\zeta^t(x,s)=\int_{0}^s \xi(x,t-r)\d r.$$
Note that within the mixed Dirichlet-Neumann boundary conditions \eqref{BCM-beg} the variables
$\eta$ and $\zeta$ satisfy the boundary conditions
$$\eta_x^t(0,s) = \eta_x^t(L,s) = \zeta^t(0,s) = \zeta^t(L,s)=0,$$
while within the full Dirichlet boundary conditions \eqref{BCD-beg}
the variables
$\eta$ and $\zeta$ satisfy the boundary conditions
$$
\eta^t(0,s) = \eta^t(L,s) = \zeta^t(0,s) = \zeta^t(L,s)=0.
$$
At this point, we rewrite \eqref{tim4-beg} as
\begin{equation}
\label{probrew}
\left\{\begin{array}{lcl}
\rho_{1} \varphi_{tt} -  k(\varphi_{x} + \psi)_{x} + \gamma \theta_x = 0,
\\ \noalign{\vskip3.5mm}
\rho_{2} \psi_{tt} - b\psi_{xx} + k(\varphi_{x} + \psi) -\gamma \theta + \sigma \xi_x = 0,
\\ \noalign{\vskip2.5mm}
\displaystyle	\rho_3 \theta_{t} -\varpi_1\int_{0}^{\infty} \mu(s) \eta_{xx}(s) \mathrm{d} s
+ \gamma (\varphi_{x} + \psi)_t = 0, \\ \noalign{\vskip1.7mm}
\eta_t - T\eta - \theta = 0, \\ \noalign{\vskip1.7mm}
\displaystyle	\rho_4 \xi_{t} -\varpi_2\int_{0}^{\infty}
\nu(s) \zeta_{xx}(s) \mathrm{d} s + \sigma \psi_{xt} = 0,\\ \noalign{\vskip1.7mm}
\zeta_t - T\zeta - \xi = 0.
\end{array} \right.
\end{equation}
The initial conditions \eqref{init-data} translate into
\begin{equation}\label{init-data-full}
\left\{\begin{array}{lcl}
\varphi(x,0)= \varphi_0(x),\quad \varphi_t(x,0)= \Phi_0(x),\quad
\psi(x,0)= \psi_0(x),\quad \psi_t(x,0)= \Psi_0(x), \smallskip\\
\theta(x,0)=\theta_0(x),\quad \eta^0(x,s)=\eta_0(x,s),\quad
\xi(x,0)=\xi_0(x),\quad \zeta^0(x,s)=\zeta_0(x,s),
\end{array} \right.
\end{equation}
where $\eta_0(x,s) = \int_0^s p_0(x,r) \mathrm{d} r$ and $\zeta_0(x,s) = \int_0^s q_0(x,r) \mathrm{d} r$.
Introducing now the state vector
$$U(t)= (\varphi(t),\Phi(t),\psi(t),\Psi(t),\theta(t),\eta^t,\xi(t),\zeta^t)\in\H,$$
we view problem \eqref{probrew}-\eqref{init-data-full} as the
abstract first-order ODE
\begin{equation}\label{refabs}
	\left \{
	\begin{array}{l}
		U_t= \mathcal{A} U, \quad t>0,\smallskip \\
		U(0)=U_0,
	\end{array}
	\right.
\end{equation}
where
$U_0 = (\varphi_{0},\Phi_{0},\psi_{0},\Psi_{0},\theta_0,\eta_0,\xi_0,\zeta_0)\in\H$
and the operator $\mathcal{A}:\D(\mathcal{A}) \subset \H \to \H$ reads
$$
\mathcal{A}\left(\begin{matrix}
\varphi\\
\Phi\\
\psi\\
\Psi\\
\theta\\
\eta\\
\xi\\
\zeta
\end{matrix}
\right)
=\left(
\begin{matrix}
\Phi\\
\frac{k}{\rho_1} (\varphi_x + \psi)_x -\frac{\gamma}{\rho_1}\theta_x\\
\Psi\\
\frac{b}{\rho_2}\psi_{xx} - \frac{k}{\rho_2}(\varphi_x+\psi)+\frac{\gamma}{\rho_2}\theta -\frac{\sigma}{\rho_2}\xi_x\\
\noalign{\vskip1.5mm}
\frac{\varpi_1}{\rho_3}\int_0^\infty \mu(s) \eta_{xx}(s)\d s - \frac{\gamma}{\rho_3}(\Phi_x+\Psi)\\
\noalign{\vskip.5mm}
T\eta + \theta\\
\noalign{\vskip.5mm}
\frac{\varpi_2}{\rho_4}\int_0^\infty \nu(s)\zeta_{xx}(s)\d s - \frac{\sigma}{\rho_4}\Psi_x\\
\noalign{\vskip.5mm}
T \zeta + \xi
\end{matrix}
\right).
$$
The domain of $\mathcal{A}$ is defined as
$$\D(\mathcal{A}) = \begin{cases}
\{U\in\mathcal{W} \;|\; \psi_x, \int_0^\infty \mu(s)\eta_x(s)\d s \in H_0^1;
\Psi ,\theta \in H_*^1 \}
\qquad &\text{(b.c.\ \eqref{BCM-beg})}
\\\noalign{\vskip0.3mm}
\{U\in\mathcal{W} \;|\;\psi\in  H^2; \Psi ,\theta \in H_0^1;
\int_0^\infty \mu(s)\eta(s)\d s \in H^2 \} \qquad &\text{(b.c.\ \eqref{BCD-beg})}
\end{cases}
$$
where
$$
\mathcal{W} =
\big\{U\in\H \;|\;
\varphi \in H^2;
\Phi,\xi \in  H_0^1;
\int_0^\infty \nu(s)\zeta(s)\d s \in H^2;
\eta,\zeta \in \D(T)\big\}.
$$
With the aid of \eqref{Teta}-\eqref{Tzeta},
after a standard computation we get the equality
\begin{equation}
\label{dissip}
\Re \l \mathcal{A} U, U\r_\H =
-\frac{\varpi_1}{2} \Gamma[\eta] -
\frac{\varpi_2}{2} \Gamma[\zeta]\leq0, \quad\, \forall U \in \D(\mathcal{A}),
\end{equation}
so that $\mathcal{A}$ is dissipative.
By means of standard techniques (see e.g.\ \cite{viscofraz,Pat}), one can also prove that
$I-\mathcal{A}$ is surjective. Thus $\mathcal{A}$ is densely defined and, due
to the Lumer-Phillips theorem, it is
the infinitesimal generator of a contraction $C_0$-semigroup $S(t):\mathcal{H}\to\mathcal{H}$ (see e.g.\ \cite{pazy}).
In particular:
\begin{itemize}
\item if $U_0\in{{\cal H}},$ then problem  $ (\ref{refabs}) $ has
a unique mild solution $U\in C^0([0,\infty),{{\cal H}})$ given by
$$U(t)=S(t)U_0, \quad t\geq0;$$
\item  if $U_0\in \D({\cal A}),$  then problem  $ (\ref{refabs}) $ has a unique classical solution
$$U\in C^0([0,\infty),\D({\cal A}))\cap  C^{1}([0,\infty),{\cal{H}});$$
\item  if $U_0\in D({{\cal A}}^n)$ for some $n\geq2$, then the solution is more regular, that is
$$ U\in \bigcap_{\ell=0}^{n}C^{n-\ell}([0,\infty),D({\cal A}^{\ell})).$$
\end{itemize}


\section{Exponential Stability}
\label{MAINSEC}

The main result of the paper reads as follows:

\begin{Theorem}
\label{EXP-STAB-TEO}
The contraction $C_0$-semigroup $S(t):\H\to\H$ generated by $\mathcal{A}$ is exponentially stable, namely,
there exist two structural constants\hspace{0.2mm} $\omega>0$ and $K=K(\omega)\geq1$ such that
$$
\|S(t)\|_{\mathcal{L}(\mathcal{H})} \leq K \e^{-\omega t}, \quad \,  \forall t\geq0.
$$
\end{Theorem}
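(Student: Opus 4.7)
The plan is to invoke the Gearhart--Prüss characterization of exponential stability for contraction $C_0$-semigroups on Hilbert spaces: since $\mathcal{A}$ generates a contraction semigroup on $\H$, it suffices to verify that $\i\R \subset \rho(\mathcal{A})$ together with the uniform bound $\sup_{\lambda \in \R} \|(\i\lambda I - \mathcal{A})^{-1}\|_{\mathcal{L}(\H)} < \infty$. I will establish both conditions simultaneously by a contradiction argument: assuming the failure of either, one extracts a sequence $\lambda_n \in \R$ and $U_n = (\varphi_n, \Phi_n, \psi_n, \Psi_n, \theta_n, \eta_n, \xi_n, \zeta_n) \in \D(\mathcal{A})$ with $\|U_n\|_\H = 1$ and $F_n := (\i\lambda_n I - \mathcal{A})U_n \to 0$ in $\H$, the aim being to contradict the unit-norm assumption by proving $\|U_n\|_\H \to 0$.

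The first step is to extract the history-variable decay from the dissipation: taking the real part of $\l F_n, U_n \r_\H$ and using \eqref{dissip} yields $\Gamma[\eta_n] + \Gamma[\zeta_n] \to 0$, which by the Dafermos-type conditions \eqref{assdafmu}--\eqref{assdafnu} forces $\|\eta_n\|_\M + \|\zeta_n\|_\N \to 0$. Solving the two memory resolvent equations $\i\lambda_n\eta_n - T\eta_n = \theta_n + o(1)$ and $\i\lambda_n\zeta_n - T\zeta_n = \xi_n + o(1)$ as explicit ODEs in $s$ with vanishing trace at $s=0$ (by membership in $\D(T)$), I will express the convolutions $\varpi_1\int_0^\infty \mu(s)\eta_{n,xx}(s)\,\d s$ and $\varpi_2\int_0^\infty \nu(s)\zeta_{n,xx}(s)\,\d s$ as Laplace-transform multipliers of the kernels applied to $\theta_n$ and $\xi_n$ modulo negligible remainders. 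Plugging these representations into the fifth and seventh equations of the resolvent system and testing against $\theta_n$ and $\xi_n$ (with a separate treatment of bounded versus diverging $\lambda_n$, since the symbols $\hat{\mu}(\i\lambda)$ and $\hat{\nu}(\i\lambda)$ behave differently in the two regimes) should drive $\|\theta_n\| + \|\xi_n\| \to 0$.

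Next, I will recover the mechanical variables by a sequence of Timoshenko-type multipliers. From $\Phi_n = \i\lambda_n\varphi_n + o(1)$ and $\Psi_n = \i\lambda_n\psi_n + o(1)$ combined with the estimates on $\theta_n, \xi_n$, testing the $\varphi$-equation against an antiderivative of $\overline{\varphi_{n,x}+\psi_n}$ and the $\psi$-equation against $\overline{\psi_{n,x}}$, together with the usual cross-multipliers coupling the two wave equations via $\varphi$ and $\psi$, will successively close estimates for the shear $\varphi_{n,x}+\psi_n$, the bending $\psi_{n,x}$, and the kinetic terms $\Phi_n, \Psi_n$, giving $\|U_n\|_\H \to 0$ and the desired contradiction.

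The principal obstacle, as emphasized in the introduction, is the full Dirichlet setting \eqref{BCD-beg}, in which several multiplier computations produce pointwise boundary terms (typically $\psi_{n,x}(0), \psi_{n,x}(L)$ and traces of $\theta_{n,x}, \xi_{n,x}$) that cannot be absorbed through the parabolic smoothing of the heat variables available in the Fourier case. I plan to establish tailored observability-type inequalities adapted to the Gurtin--Pipkin memory structure: exploiting the explicit $s$-representation of $\eta_n, \zeta_n$ in terms of $\theta_n, \xi_n$ together with integration-by-parts identities against a weight $q(x)$ with $q(0)<0<q(L)$ of Morawetz type, I will bound the boundary traces by global $L^2$-norms already controlled at the previous stages. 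Coordinating the choice of multiplier geometry across all four wave-type equations so that the pointwise boundary contributions cancel or feed back into a single Grönwall-type inequality is where I expect the bulk of the technical work to lie.
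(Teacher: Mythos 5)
Your overall architecture (Gearhart--Pr\"uss, dissipation giving $\|\eta_n\|_{\M}+\|\zeta_n\|_{\N}\to0$, then temperatures, then mechanical variables, then a Morawetz-type multiplier $z(x)=x-\sfrac{L}{2}$ plus an interpolation bound on the traces of $\alpha$ and $\beta$ to handle the Dirichlet boundary terms) matches the paper. However, there is a genuine gap at the most delicate step: the recovery of the shear $\varphi_x+\psi$ and the bending $\psi_x$. You propose to close these via ``the usual cross-multipliers coupling the two wave equations via $\varphi$ and $\psi$''; for Timoshenko systems that scheme produces the notorious term proportional to $\rho_1 b-\rho_2 k$ and closes only under the equal-wave-speed condition, which is precisely what the theorem must avoid. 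The mechanism that actually works here --- and the structural reason the \emph{fully} coupled system is exponentially stable without parameter restrictions --- is that $(\varphi_x+\psi)_t$ and $\psi_{xt}$ appear as the coupling terms in the two heat equations: substituting \eqref{R1}, \eqref{R3} into \eqref{R5} and \eqref{R7} gives $\i\lambda\gamma(\varphi_x+\psi)=\varpi_1\alpha_x-\i\lambda\rho_3\theta+\dots$ and $\i\lambda\sigma\psi_x=\varpi_2\beta_x-\i\lambda\rho_4\xi+\dots$, which one then tests against $k(\varphi_x+\psi)$ and $b\psi_x$ respectively. Your plan never identifies this route, and without it the argument does not close.

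Two further points need attention. First, your derivation of $\|\theta_n\|+\|\xi_n\|\to0$ by solving the memory ODEs explicitly and treating the convolution as a Laplace-transform multiplier of $\theta_n$, then testing the \emph{temperature} equations against $\theta_n$ and $\xi_n$, is left at the level of a plan: the resulting quadratic form involves $\Re\big[\hat\mu(\i\lambda)/(\i\lambda)\big]\,\|\theta_{n,x}\|^2$-type terms whose sign and uniform lower bounds in $\lambda$ are nontrivial, and the remainder coming from $\hat\eta_n$ (which vanishes only in $\M$) is not obviously negligible in the norms you need. The paper instead tests the \emph{memory} equations \eqref{R6}, \eqref{R8} against $\theta$ and $\xi$ in weighted spaces built on the truncated kernels $m$ and $n$ (to accommodate a possible singularity of $\mu,\nu$ at $s=0$), which yields $\|\theta\|^2$ directly on the left-hand side with no sign issue. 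Second, your single contradiction sequence must handle $\lambda_n\to0$: all the multiplier estimates above carry factors of $1/|\lambda|$, so the constants degenerate near $\lambda=0$ and the point $\lambda_0=0$ requires a separate argument (the paper treats it by a different method). As written, your proposal does not address this regime.
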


The remaining of the section is devoted to the proof of Theorem \ref{EXP-STAB-TEO}.

\subsection{Resolvent analysis}
For every $\lambda \in \R$ and
$\widehat U=(\hat \varphi,\hat \Phi,\hat \psi,\hat \Psi,\hat\theta,\hat\eta,\hat\xi,\hat\zeta)\in \H$,
we consider the resolvent equation
$$
\i\lambda U - \mathcal{A} U = \widehat U
$$
in the unknown $U = (\varphi,\Phi,\psi,\Psi,\theta,\eta,\xi,\zeta) \in \D(\mathcal{A})$.
Multiplying by $ 2U$ in $\H$, taking the real part and exploiting \eqref{dissip}, we get the identity
$$
\varpi_1 \Gamma[\eta] +
\varpi_2 \Gamma[\zeta] = 2 \Re \l \i\lambda U - \mathcal{A} U, U\r_\H =
2\Re \l \widehat U , U\r_\H.
$$
Recalling that $\Gamma[\eta]\geq0$ and $ \Gamma[\zeta]\geq0$, we readily find
\begin{equation}
\label{DISS}
\varpi_1 \Gamma[\eta] +\varpi_2 \Gamma[\zeta]  \leq 2 \|U\|_\H \| \widehat U\|_\H.
\end{equation}
Such an estimate yields the following bound on the memory variables $\eta$ and $\zeta$.

\begin{Lemma}
\label{ETA+ZETA}
For every $\lambda\in \mathbb{R}$, the inequality
$$
\varpi_1 \|\eta\|_{\M}^2 + \varpi_2 \|\zeta \|_{\N}^2 \leq c\|U\|_\H \|\widehat U\|_\H
$$
holds for some structural constant $c>0$ independent of $\lambda$.
\end{Lemma}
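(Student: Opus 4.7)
The plan is to deduce the inequality directly from the Dafermos-type decay conditions \eqref{assdafmu}-\eqref{assdafnu} combined with the dissipation bound \eqref{DISS} already established. The key observation is that the memory norms $\|\eta\|_\M^2$ and $\|\zeta\|_\N^2$ are weighted integrals of $\|\eta_x(s)\|^2$ and $\|\zeta_x(s)\|^2$ against $\mu$ and $\nu$, while the quantities $\Gamma[\eta]$ and $\Gamma[\zeta]$ are the same integrals weighted by $-\mu'$ and $-\nu'$. Hence the hypotheses \eqref{assdafmu}-\eqref{assdafnu}, which exactly assert that $\delta_1 \mu(s) \leq -\mu'(s)$ and $\delta_2 \nu(s) \leq -\nu'(s)$ almost everywhere, provide a pointwise comparison between the two kinds of weights.

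First I would multiply \eqref{assdafmu} by $\|\eta_x(s)\|^2\geq 0$ and integrate in $s\in\R^+$ to obtain
$$
\delta_1 \|\eta\|_\M^2 = \delta_1\int_0^\infty \mu(s)\|\eta_x(s)\|^2\d s \leq \int_0^\infty -\mu'(s)\|\eta_x(s)\|^2\d s = \Gamma[\eta],
$$
and analogously, using \eqref{assdafnu},
$$
\delta_2\|\zeta\|_\N^2 \leq \Gamma[\zeta].
$$

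Multiplying the first of these by $\varpi_1/\delta_1$ and the second by $\varpi_2/\delta_2$, summing them and appealing to \eqref{DISS}, we get
$$
\varpi_1\|\eta\|_\M^2 + \varpi_2\|\zeta\|_\N^2 \leq \frac{1}{\min\{\delta_1,\delta_2\}}\bigl(\varpi_1\Gamma[\eta] + \varpi_2\Gamma[\zeta]\bigr) \leq \frac{2}{\min\{\delta_1,\delta_2\}}\|U\|_\H\|\widehat U\|_\H,
$$
which is the desired conclusion with $c = 2/\min\{\delta_1,\delta_2\}$, a constant depending only on the structural data of the memory kernels and thus independent of $\lambda$.

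There is essentially no obstacle here: the statement is a direct consequence of the exponential-decay-type hypothesis on the memory kernels, which is the standard ingredient that makes the Dafermos history framework controllable by the entropy production $\Gamma$. The only point to double-check is that \eqref{DISS} indeed involves the full norm $\|U\|_\H\|\widehat U\|_\H$ on the right-hand side (which it does, by construction), so that no spurious $\lambda$-dependence is introduced.
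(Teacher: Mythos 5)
Your argument is correct and is exactly the paper's proof, which simply states that the lemma ``follows immediately from \eqref{DISS} and \eqref{assdafmu}-\eqref{assdafnu}''; you have just written out the two-line computation ($\delta_1\|\eta\|_\M^2\leq\Gamma[\eta]$, $\delta_2\|\zeta\|_\N^2\leq\Gamma[\zeta]$, then invoke \eqref{DISS}) that the authors leave implicit. Nothing to change.
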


\begin{proof}
Follows immediately from \eqref{DISS} and \eqref{assdafmu}-\eqref{assdafnu}.
\end{proof}

At this point, we write the resolvent equation componentwise:
\begin{align}
\label{R1}
&\i\lambda \varphi - \Phi =\hat \varphi,\\\noalign{\vskip0.7mm}
\label{R2}
&\i\lambda \rho_1 \Phi  -k(\varphi_{x} +\psi)_x  +\gamma \theta_x =\rho_1 \hat \Phi,\\\noalign{\vskip0.7mm}
\label{R3}
&\i\lambda \psi - \Psi =\hat \psi,\\\noalign{\vskip0.7mm}
\label{R4}
&\i\lambda\rho_2 \Psi -b\psi_{xx} +k(\varphi_{x} +\psi) -\gamma\theta + \sigma \xi_x=\rho_2 \hat \Psi,\\
\label{R5}
&\i\lambda \rho_3 \theta - \varpi_1\int_0^\infty \mu(s)\eta_{xx}(s)\d s
+ \gamma (\Phi_{x} +\Psi) = \rho_3 \hat\theta,\\
\label{R6}
&\i\lambda \eta - T\eta - \theta = \hat\eta,\\
\label{R7}
&\i\lambda \rho_4 \xi - \varpi_2\int_0^\infty \nu(s)\zeta_{xx}(s)\d s + \sigma \Psi_{x}= \rho_4 \hat\xi,\\
\label{R8}
&\i\lambda \zeta - T\zeta - \xi= \hat\zeta.
\end{align}
In the next two results, we establish suitable controls on the temperature variables $\theta$ and $\xi$.

\begin{Lemma}
\label{THETA+XI}
For every $\lambda\in \mathbb{R}$ and every $\eps \in (0,1)$, the inequality
$$
\rho_3\|\theta\|^2 + \rho_4 \|\xi\|^2 \leq \eps\|U\|_\H^2 + \frac{c}{\eps} \|U\|_\H\|\widehat U\|_\H
$$
holds for some structural constant $c>0$ independent of $\lambda$ and $\eps$.
\end{Lemma}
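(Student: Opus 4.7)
The plan is to derive a tractable identity for $\|\theta\|^2$ (and analogously for $\|\xi\|^2$) by combining the memory equation \eqref{R6} with the temperature equation \eqref{R5}, and to estimate each resulting term via Lemma~\ref{ETA+ZETA}, Poincar\'e, Cauchy-Schwarz, and Young's inequality. The key auxiliary object is
$$E := \int_0^\infty \mu(s)\eta(s)\,\d s,$$
which inherits the boundary conditions of $\eta(s)$ (Neumann on $E_x$ under \eqref{BCM-beg}, Dirichlet on $E$ under \eqref{BCD-beg}). In either regime $\langle E_{xx},E\rangle = -\|E_x\|^2$ and $\|E_x\|^2\leq g(0)\|\eta\|_\M^2\leq c\|U\|_\H\|\widehat U\|_\H$ by Lemma~\ref{ETA+ZETA}.

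First, I would pair \eqref{R6} with $\mu(s)\bar\theta$ in $L^2$ and integrate over $s\in\R^+$. Since $\theta$ is $s$-independent, the $-\theta$ term yields $-g(0)\|\theta\|^2$, while the $-T\eta=\eta'$ term becomes, via integration by parts in $s$ (boundary contributions vanishing thanks to $\eta(0)=0$ for $\eta\in\D(T)$ and $\mu(s)\|\eta(s)\|\to 0$ at infinity), $\int_0^\infty|\mu'(s)|\langle\eta(s),\theta\rangle\,\d s$. Taking real parts yields
$$g(0)\|\theta\|^2 = -\lambda\,\Im\langle E,\theta\rangle + \Re\!\int_0^\infty|\mu'(s)|\langle\eta(s),\theta\rangle\,\d s - \Re\!\int_0^\infty\mu(s)\langle\hat\eta(s),\theta\rangle\,\d s.$$
The $\lambda$-dependence is removed by pairing \eqref{R5} with $E$: integration by parts in $x$ followed by taking real parts gives
$$-\lambda\,\Im\langle E,\theta\rangle = \tfrac{\varpi_1}{\rho_3}\|E_x\|^2 + \tfrac{\gamma}{\rho_3}\Re\langle\Phi_x+\Psi,E\rangle - \Re\langle\hat\theta,E\rangle,$$
yielding a closed expression for $g(0)\|\theta\|^2$.

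Each term is then estimated: $\|E_x\|^2$ by Lemma~\ref{ETA+ZETA}; $\Re\langle\Phi_x+\Psi,E\rangle$ by shifting the derivative off $\Phi$ through an integration by parts in $x$ and Poincar\'e on $E$; the $\hat\theta$- and $\hat\eta$-terms by Cauchy-Schwarz on the weighted integrals. The cross term $\Re\int_0^\infty|\mu'(s)|\langle\eta,\theta\rangle\,\d s$ is delicate, since the kernel $|\mu'|$ may fail to be integrable at $s=0$. To bypass this, I would exploit the auxiliary identity
$$\int_0^\infty |\mu'(s)|\|\eta(s)\|^2\,\d s = 2\Re\langle E,\theta\rangle + 2\Re\!\int_0^\infty \mu(s)\langle\eta(s),\hat\eta(s)\rangle\,\d s,$$
obtained by testing \eqref{R6} against $\mu(s)\bar\eta(s)$ and using $\Re\langle\eta',\eta\rangle = \tfrac{1}{2}\partial_s\|\eta\|^2$ followed by integration by parts in $s$. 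This controls the $|\mu'|$-weighted $L^2$-norm of $\eta$ without invoking $\mu(0)$, and a Cauchy-Schwarz on the cross term closes the estimate.

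The various $\|U\|_\H^a\|\widehat U\|_\H^b$ products with $a+b=2$ arising throughout are converted to the target form via Young's inequality in the sharp shape $\|U\|_\H^{3/2}\|\widehat U\|_\H^{1/2} = \|U\|_\H\sqrt{\|U\|_\H\|\widehat U\|_\H}\leq\eps\|U\|_\H^2 + (1/4\eps)\|U\|_\H\|\widehat U\|_\H$. The bound on $\rho_4\|\xi\|^2$ follows by a strictly parallel argument from \eqref{R7}-\eqref{R8}, with $\zeta$, $\nu$, $Z := \int_0^\infty\nu(s)\zeta(s)\,\d s$, and $\Psi_x$ replacing $\eta$, $\mu$, $E$, and $\Phi_x+\Psi$. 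The main technical hurdle is the rigorous justification of the integration by parts in $s$ against a kernel that may be singular at the origin; this is handled by first carrying out all computations on $U\in\D(\mathcal{A}^n)$ for $n\geq 2$ (where $\eta,\zeta$ possess the regularity needed to guarantee vanishing of the boundary contributions) and then passing to the limit by density.
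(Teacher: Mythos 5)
Your overall architecture (test \eqref{R6} against the kernel times $\theta$, eliminate the $\lambda$-dependent term by pairing \eqref{R5} with $E=\int_0^\infty\mu(s)\eta(s)\,\d s$, then repeat for $\xi$) is sound and close in spirit to the paper's, and your identities are correct as formal computations. The paper, however, never tests against the full kernel $\mu$: it replaces $\mu$ by the truncated kernel $m(s)=\mu(s_0)\chi_{(0,s_0]}(s)+\mu(s)\chi_{(s_0,\infty)}(s)$, which is constant near the origin, so that after the integration by parts in $s$ the only surviving cross term is $\int_{s_0}^\infty(-\mu'(s))\l\eta(s),\theta\r\,\d s$, bounded by $c\|\theta\|\sqrt{\mu(s_0)}\sqrt{\Gamma[\eta]}$. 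This truncation is not a cosmetic device; it is exactly what makes the cross term estimable, and it is the step your argument is missing.

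The gap in your version is the bound on $\Re\int_0^\infty|\mu'(s)|\l\eta(s),\theta\r\,\d s$. Your auxiliary identity does correctly control $\int_0^\infty|\mu'(s)|\|\eta(s)\|^2\,\d s$, but this does not close the cross term: since $\theta$ is independent of $s$, any Cauchy--Schwarz or Young splitting of $\int_0^\infty|\mu'(s)|\|\eta(s)\|\,\|\theta\|\,\d s$ with respect to the measure $|\mu'(s)|\,\d s$ produces the factor $\big(\int_0^\infty|\mu'(s)|\,\d s\big)^{1/2}=\sqrt{\mu(0^+)}$, which the standing assumptions allow to be $+\infty$ (the memory kernels are only integrable, ``possibly unbounded near zero''). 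The same obstruction reappears if you instead pass to $\|\eta_x(s)\|$ and invoke $\Gamma[\eta]$. Your closing remark about first working on $\D(\mathcal{A}^n)$ and then passing to the limit does not repair this: the issue is not the justification of the integration by parts but the absence of a quantitative bound uniform in the data, and a density argument cannot manufacture one. The fix is precisely the paper's: cut the kernel at some $s_0>0$ with $\mu(s_0)>0$ \emph{before} integrating by parts in $s$; your identity for $-\lambda\Im\l E,\theta\r$ and all your remaining estimates then go through essentially verbatim. The same correction is needed for the $\xi$-part with $\nu$.
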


In the proof of Lemma \ref{THETA+XI}, as well as in the proofs of the subsequent
Lemmas \ref{Theta-aux}-\ref{corol-varphix-psi-final}, we always denote by $c>0$ a generic structural constant
independent of $\lambda$, whose value
might change from line to line or even within the same line.

\begin{proof}
In order to deal with
the possible singularity of $\mu$ at zero, we fix $s_0>0$ such that $\mu(s_0)>0$ and
we define the kernel $m(s)= \mu(s_0)\chi_{(0,s_0]}(s) + \mu(s)\chi_{(s_0,\infty)}(s) $.
Then, we consider the space
\begin{align*}
&\mathcal{U}_0= \begin{cases} L^2_{m}(\R^+; L^2_*)\qquad \text{(b.c.\ \eqref{BCM-beg})}\\
L^2_{m}(\R^+; L^2) \qquad \text{(b.c.\ \eqref{BCD-beg})}\end{cases}
\end{align*}
equipped with the inner product
$$
\langle\eta_1,\eta_2\rangle_{\mathcal{U}_0}=\int_0^\infty m(s) \langle\eta_{1}(s),\eta_{2}(s)\rangle \d s.
$$
Since $m(s)\leq \mu(s)$, the memory space $\M$ is continuously embedded into $\mathcal{U}_0$.
Therefore, we can
multiply \eqref{R6} by $\rho_3\theta$ in $\mathcal{U}_0$, finding the identity
\begin{equation}
\label{temp1}
\rho_3\Big(\int_0^\infty m(s) \d s\Big)\|\theta\|^2
= \underbrace{\i\lambda \rho_3 \l\eta, \theta\r_{\mathcal{U}_0}}_{:=I_1}
\underbrace{- \rho_3\l T\eta, \theta\r_{\mathcal{U}_0}}_{:=I_2}-\rho_3 \l \hat \eta, \theta\r_{\mathcal{U}_0}.
\end{equation}
Exploiting \eqref{R5}, it is not difficult to see that
\begin{align*}
|I_1|
&\leq c\Big(\int_0^\infty \mu (s)\|\eta_x(s)\| \d s\Big)^2+
c\Big(\int_0^\infty \mu (s)\|\eta_x(s)\| \d s\Big) \|\Phi\|  +c|\l \eta,  \Psi\r_{\mathcal{U}_0}|
+c|\l \eta,\hat\theta \r_{\mathcal{U}_0}|\\\noalign{\vskip0.7mm}
&\leq c \|\eta\|_{\M}^2 +  c \|\eta\|_{\M} \|\Phi\| + c \|\eta\|_{\M} \|\Psi\|
+ c \|\eta\|_{\M} \|\hat \theta\|\\ \noalign{\vskip1.8mm}
& \leq   c \|\eta\|_{\M}^2 + c \|\eta\|_{\M} \|U\|_\H  + c\|U\|_\H \|\widehat U\|_\H.
\end{align*}
Integrating by parts in $s$ (the boundary terms vanish, see e.g.\ \cite{Terreni}),
we also infer that
$$
|I_2| = \big| \rho_3 \int_{s_0}^\infty -\mu'(s)\l \eta(s), \theta \r \d s \big|
\leq c\|\theta\|\Big(\int_{s_0}^\infty -\mu'(s) \|\eta_x(s)\| \d s \Big) \leq c \|U\|_\H \sqrt{\Gamma[\eta]}.
$$
Plugging the estimates above into \eqref{temp1} and
noting that $|-\rho_3 \l \hat \eta, \theta\r_{\mathcal{U}_0}|\leq c\|U\|_\H \|\widehat U\|_\H$,
we obtain
\begin{align*}
\rho_3\|\theta\|^2 &\leq c \|\eta\|_{\M}^2 + c \|\eta\|_{\M} \|U\|_\H  + c\|U\|_\H \|\widehat U\|_\H
+ c \|U\|_\H \sqrt{\Gamma[\eta]}\\
&\leq  c\|U\|_\H \|\widehat U\|_\H + c \|U\|_\H \sqrt{\|U\|_\H \|\widehat U\|_\H }
\end{align*}
where the second inequality follows from \eqref{DISS} and Lemma \ref{ETA+ZETA}. Thus,
for every $\eps\in (0,1)$, we end up with
$$
\rho_3\|\theta\|^2 \leq \frac{\eps}{2} \|U\|_\H^2 + \frac{c}{\eps} \|U\|_\H\|\widehat U\|_\H
$$
where $c>0$ is independent of $\lambda$ and $\eps$.

Next, in order to deal with
the possible singularity of $\nu$ at zero, we fix $s_1>0$ such that $\nu(s_1)>0$ and
we introduce the kernel $n(s)= \nu(s_1)\chi_{(0,s_1]}(s) + \nu(s)\chi_{(s_1,\infty)}(s)$.
Then, we consider the space
$$
\mathcal{V}_0= L_n^2(\mathbb{R}^+; L^2),
$$
equipped with the inner product
$$
\langle\zeta_1,\zeta_2\rangle_{\mathcal{V}_0}=\int_0^\infty n(s) \langle\zeta_{1}(s),\zeta_{2}(s)\rangle \d s.
$$
Being $n(s)\leq\nu(s)$, the memory space $\N$ is continuously embedded into $\mathcal{V}_0$. As a consequence,
multiplying \eqref{R8} by $\rho_4 \xi$ in $\mathcal{V}_0$, we get
\begin{equation}
\label{temp1-3}
\rho_4\Big(\int_0^\infty n(s) \d s\Big)\|\xi\|^2= \underbrace{\i\lambda \rho_4 \l\zeta, \xi\r_{\mathcal{V}_0}}_{:=I'_1}
\underbrace{- \rho_4\l T\zeta, \xi\r_{\mathcal{V}_0}}_{:=I'_2}-\rho_4 \l \hat \zeta, \xi\r_{\mathcal{V}_0}.
\end{equation}
An exploitation of \eqref{R7} yields
$$
|I'_1| \leq   c \|\zeta\|_{\N}^2 + c \|\zeta\|_{\N} \|U\|_\H  + c\|U\|_\H \|\widehat U\|_\H,
$$
while integrating by parts in $s$
we find
$$|I'_2|\leq c\|U\|_{\cal{H}}\sqrt{\Gamma[\zeta]}$$
(cf.\ the corresponding estimates for $I_1$ and $I_2$ above).
Plugging these inequalities
into \eqref{temp1-3} and owing to \eqref{DISS} and Lemma \ref{ETA+ZETA}, we finally get
\begin{align*}
\rho_4\|\xi\|^2 &\leq c \|\zeta\|_{\N}^2 + c \|\zeta\|_{\N} \|U\|_\H
+ c\|U\|_\H \|\widehat U\|_\H + c \|U\|_\H \sqrt{\Gamma[\zeta]}\\
&\leq  c\|U\|_\H \|\widehat U\|_\H + c \|U\|_\H \sqrt{\|U\|_\H \|\widehat U\|_\H}\\\noalign{\vskip1mm}
&\leq \frac{\eps}{2} \|U\|_\H^2 + \frac{c}{\eps} \|U\|_\H\|\widehat U\|_\H
\end{align*}
for every $\eps\in (0,1)$ and some $c>0$ independent of $\lambda$ and $\eps$.
The proof is finished.
\end{proof}

\begin{Lemma}\label{Theta-aux}
For every $\lambda\in \mathbb{R}$, the inequality
$$
\|\theta_x\| + \|\xi_x\|\leq c \big[1+|\lambda|\big] \sqrt{\|U\|_\H \|\widehat U\|_\H} + c\| \widehat U\|_\H
$$
holds for some structural constant $c>0$ independent of $\lambda$.
\end{Lemma}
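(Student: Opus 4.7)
The plan is to extract $\theta_x$ (and similarly $\xi_x$) by multiplying the memory equation \eqref{R6} by a suitably truncated kernel and integrating in $s$, using the boundary condition $\lim_{s\to0}\eta(s)=0$ provided by $\eta\in\D(T)$. More precisely, I would reuse the truncated kernel $m(s)=\mu(s_0)\chi_{(0,s_0]}(s)+\mu(s)\chi_{(s_0,\infty)}(s)$ already introduced in the proof of Lemma \ref{THETA+XI}. Since $m$ is continuous (non-singular at $0$), bounded, and integrable, it can be used as a weight.

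Rewrite \eqref{R6} as $\eta'(s)+\i\lambda\eta(s)=\theta+\hat\eta(s)$ (recalling $T\eta=-\eta'$). Multiply by $m(s)$, integrate over $\R^+$, and integrate the $\eta'$-term by parts: the boundary term at $s=0$ vanishes because $\eta(0)=0$; the boundary term at infinity vanishes by decay; and $m'(s)=0$ on $(0,s_0)$ while $m'(s)=\mu'(s)$ on $(s_0,\infty)$. This yields the identity
$$
\theta\,\int_0^\infty m(s)\,\d s
=\i\lambda\int_0^\infty m(s)\,\eta(s)\,\d s
-\int_{s_0}^\infty \mu'(s)\,\eta(s)\,\d s
-\int_0^\infty m(s)\,\hat\eta(s)\,\d s.
$$
Differentiating with respect to $x$ (which is allowed componentwise since the identity holds in $H^1$) and taking the $L^2$-norm produces three terms to control.

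Each term is then estimated via Cauchy--Schwarz. For the first, $\int_0^\infty m(s)\|\eta_x(s)\|\,\d s\le\sqrt{\int_0^\infty m(s)\,\d s}\,\|\eta\|_{\M}$ since $m\leq\mu$, combined with Lemma \ref{ETA+ZETA} giving $\|\eta\|_{\M}\leq c\sqrt{\|U\|_\H\|\widehat U\|_\H}$; this contributes the $|\lambda|$-factor. For the second, $\int_{s_0}^\infty|\mu'(s)|\,\d s\le\mu(s_0)<\infty$ by monotonicity, so Cauchy--Schwarz together with \eqref{DISS} gives the bound $c\sqrt{\Gamma[\eta]}\leq c\sqrt{\|U\|_\H\|\widehat U\|_\H}$. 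For the third, $\int_0^\infty m(s)\|\hat\eta_x(s)\|\,\d s\le c\,\|\hat\eta\|_{\M}\leq c\,\|\widehat U\|_\H$. Combining these yields the desired bound for $\|\theta_x\|$. The estimate for $\|\xi_x\|$ is obtained in perfectly analogous fashion starting from \eqref{R8}, employing the truncated kernel $n(s)$ together with Lemma \ref{ETA+ZETA} and the dissipation estimate \eqref{DISS} for $\zeta$.

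The only genuine obstacle is the possible singularity of $\mu$ (and $\nu$) at $s=0$: the naive choice of weight $\mu(s)$ would make both $\int_0^\infty\mu(s)\,\d s$ potentially divergent at the origin and, worse, $\int_0^\infty|\mu'(s)|\,\d s=\mu(0^+)=+\infty$. The truncation trick circumvents both issues: it trades $\mu$ for the flat weight $m$, which is bounded and integrable, while restricting the integration against $\mu'$ to $(s_0,\infty)$ where $\mu'$ is integrable.
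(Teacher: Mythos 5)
Your proposal is correct and follows essentially the same route as the paper: there, \eqref{R6} is tested against $\theta$ in the weighted gradient space $L^2_m(\R^+;H^1)$ built from the very same truncated kernel $m$, the term $T\eta$ is integrated by parts in $s$ so that only $\mu'$ on $(s_0,\infty)$ survives, and the estimate is closed via Lemma \ref{ETA+ZETA} and \eqref{DISS}, exactly as you do. Integrating the equation against $m(s)\,\d s$ first and then differentiating in $x$ is merely a reformulation of that duality argument, so no further comment is needed.
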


\begin{proof}
As in the proof of Lemma \ref{THETA+XI},
we consider the kernel $m(s)= \mu(s_0)\chi_{(0,s_0]}(s) + \mu(s)\chi_{(s_0,\infty)}(s)$
where $s_0>0$ is such that $\mu(s_0)>0$.
Then, we introduce the space
\begin{align*}
&\mathcal{U}_1= \begin{cases} L^2_{m}(\R^+; H_*^1)\qquad \text{(b.c.\ \eqref{BCM-beg})}\\
L^2_{m}(\R^+; H_0^1) \qquad \text{(b.c.\ \eqref{BCD-beg})}\end{cases}
\end{align*}
equipped with the inner product
$$
\langle\eta_1,\eta_2\rangle_{\mathcal{U}_1}=\int_0^\infty m(s) \langle\eta_{1x}(s),\eta_{2x}(s)\rangle \d s.
$$
Again, since
$m(s)\leq \mu(s)$, the memory space $\M$ is continuously embedded into $\mathcal{U}_1$. Thus,
multiplying \eqref{R6} by $\theta$ in $\mathcal{U}_1$, we infer that
$$
\Big(\int_0^\infty m(s) \d s\Big)\|\theta_x\|^2
= \i\lambda \langle \eta, \theta \rangle_{\mathcal{U}_1} -\langle T\eta, \theta \rangle_{\mathcal{U}_1}
- \langle \hat{\eta}, \theta\rangle_{\mathcal{U}_1}.
$$
It follows from Lemma \ref{ETA+ZETA} that
\begin{equation*}
|\i\lambda \langle \eta,\theta\rangle_{\mathcal{U}_1}|
\leq c|\lambda| \|\theta_x\|\|\eta\|_{\mathcal{M}}
\leq c|\lambda| \|\theta_x\|\sqrt{\|U\|_{\cal H} \|\widehat U\|_{\cal H}}.
\end{equation*}
Moreover, integrating by parts in $s$ and using \eqref{DISS}, we can write
(cf.\ the estimate for $I_2$ in the proof of Lemma \ref{THETA+XI})
$$
|-\langle T\eta, \theta \rangle_{\mathcal{U}_1}|=\big| \int_{s_0}^\infty -\mu'(s)\l \eta_x(s), \theta_x \r \d s \big|
\leq c \|\theta_x\| \sqrt{\Gamma[\eta]} \leq c \|\theta_x\| \sqrt{\|U\|_{\cal H} \|\widehat U\|_{\cal H}}.
$$
Finally, it is easy to see that
$|-\langle \hat{\eta}, \theta\rangle_{\mathcal{U}_1}|\leq c\|\theta_x\|\|\widehat U\|_{\cal H},$
and the required bound for $\|\theta_x\|$ follows.

In order to estimate $\|\xi_x\|$, we proceed in an analogous way.
As in the proof of Lemma \ref{THETA+XI},
we consider the kernel $n(s)= \nu(s_1)\chi_{(0,s_1]}(s) + \nu(s)\chi_{(s_1,\infty)}(s)$
where $s_1>0$ is such that $\nu(s_1)>0$,
and we introduce the space
$$
\mathcal{V}_1= L^2_{n}(\R^+; H_0^1)
$$
equipped with the inner product
$$
\langle\zeta_1,\zeta_2\rangle_{\mathcal{V}_1}=\int_0^\infty n(s) \langle\zeta_{1x}(s),\zeta_{2x}(s)\rangle \d s.
$$
Since $\N$ is continuously embedded into $\mathcal{V}_1$,
multiplying \eqref{R8} by $\xi$ in $\mathcal{V}_1$ we find
$$
\Big(\int_0^\infty n(s) \d s\Big)\|\xi_x\|^2
= \i\lambda \langle \zeta, \xi \rangle_{\mathcal{V}_1}
-\langle T\zeta, \xi \rangle_{\mathcal{V}_1}
- \langle \hat{\zeta}, \xi\rangle_{\mathcal{V}_1}.
$$
Arguing exactly as above, the modulus of the right-hand side is less than or equal to
$$
c \big[1+|\lambda|\big] \|\xi_x\|\sqrt{\|U\|_\H \|\widehat U\|_\H} + c\|\xi_x\|\| \widehat U\|_\H,
$$
and the required bound for $\|\xi_x\|$ has been proved.
\end{proof}

The next step is to control the variables $\Phi$ and $\Psi$.

\begin{Lemma}\label{lemma-Phi}
For every $\lambda\in \R$ and every $\varepsilon\in (0,1)$, the inequality
\begin{equation*}
\rho_1\|\Phi\|^2 +\rho_2\|\Psi\|^2\leq c\varepsilon\|U\|_\H^2
+c\|U\|_\H\big[ \|\varphi_x+\psi\|+\|\psi_x\|\big] +\dfrac{c}{\varepsilon^3}\|U\|_\H \|\widehat{U}\|_\H
\end{equation*}
holds for some structural constant $c>0$ independent of $\lambda$ and $\varepsilon$.
\end{Lemma}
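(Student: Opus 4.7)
The plan is to test the momentum equations \eqref{R2} and \eqref{R4} in $L^2$ against $\bar\varphi$ and $\bar\psi$, respectively, and then to use the kinematic identities \eqref{R1} and \eqref{R3} to convert the resulting $\i\lambda$-factors into the squared norms $\|\Phi\|^2$ and $\|\Psi\|^2$.

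First I would multiply \eqref{R2} by $\bar\varphi$ and integrate by parts in $x$; the boundary contributions vanish because $\varphi$ is Dirichlet under both \eqref{BCM-beg} and \eqref{BCD-beg}. Substituting $\Phi=\i\lambda\varphi-\hat\varphi$ from \eqref{R1} into the tautology $\|\Phi\|^2=-\i\lambda\langle\Phi,\varphi\rangle-\langle\Phi,\hat\varphi\rangle$ then gives
$$
\rho_1\|\Phi\|^2=k\langle\varphi_x+\psi,\varphi_x\rangle-\gamma\langle\theta,\varphi_x\rangle-\rho_1\langle\hat\Phi,\varphi\rangle-\rho_1\langle\Phi,\hat\varphi\rangle.
$$
Running the same recipe on \eqref{R4} and \eqref{R3}, where the boundary terms produced by integrating $b\psi_{xx}$ and $\sigma\xi_x$ against $\bar\psi$ vanish in both boundary-condition cases (via $\psi_x|_{\partial}=\xi|_{\partial}=0$ under \eqref{BCM-beg}, via $\psi|_{\partial}=0$ under \eqref{BCD-beg}), delivers
$$
\rho_2\|\Psi\|^2=b\|\psi_x\|^2+k\langle\varphi_x+\psi,\psi\rangle-\gamma\langle\theta,\psi\rangle-\sigma\langle\xi,\psi_x\rangle-\rho_2\langle\hat\Psi,\psi\rangle-\rho_2\langle\Psi,\hat\psi\rangle.
$$

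Next I would sum the two identities and take real parts. A fortunate recombination takes place: the cross terms give $k\langle\varphi_x+\psi,\varphi_x\rangle+k\langle\varphi_x+\psi,\psi\rangle=k\|\varphi_x+\psi\|^2$, and the $\theta$-contributions fuse into $-\gamma\Re\langle\theta,\varphi_x+\psi\rangle$. The six source-type brackets are immediately bounded by $c\|U\|_\H\|\widehat U\|_\H$ via Cauchy-Schwarz and Poincar\'e, while the purely structural terms satisfy
$$
k\|\varphi_x+\psi\|^2+b\|\psi_x\|^2\leq c\|U\|_\H\big[\|\varphi_x+\psi\|+\|\psi_x\|\big],
$$
since both $\|\varphi_x+\psi\|$ and $\|\psi_x\|$ are components of $\|U\|_\H$.

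The residual thermal couplings $\gamma\Re\langle\theta,\varphi_x+\psi\rangle$ and $\sigma\Re\langle\xi,\psi_x\rangle$ are the main technical point. I would bound them by Young's inequality,
$$
\gamma|\langle\theta,\varphi_x+\psi\rangle|\leq \eps\|\varphi_x+\psi\|^2+\frac{c}{\eps}\|\theta\|^2\leq c\eps\|U\|_\H^2+\frac{c}{\eps}\|\theta\|^2,
$$
and analogously for the $\xi$-bracket, then feed $\|\theta\|^2$ and $\|\xi\|^2$ into Lemma \ref{THETA+XI} evaluated at the smaller parameter $\eps^2$ (admissible since $\eps^2\in(0,1)$), obtaining $\|\theta\|^2+\|\xi\|^2\leq c\eps^2\|U\|_\H^2+(c/\eps^2)\|U\|_\H\|\widehat U\|_\H$. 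Multiplying by $c/\eps$ yields exactly $c\eps\|U\|_\H^2+(c/\eps^3)\|U\|_\H\|\widehat U\|_\H$, and collecting all the estimates produces the stated inequality. The delicate step is precisely this quadratic rescaling of the parameter in Lemma \ref{THETA+XI}: applying that lemma directly with $\eps$ (rather than $\eps^2$) would leave an $O(1)$ coefficient in front of $\|U\|_\H^2$, which could not later be absorbed when this bound is coupled with the remaining resolvent estimates.
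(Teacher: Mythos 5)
Your proposal is correct and follows essentially the same route as the paper: the same multipliers $\varphi$ and $\psi$ against \eqref{R2} and \eqref{R4}, the same substitution of \eqref{R1} and \eqref{R3}, and the same application of Lemma \ref{THETA+XI} with the rescaled parameter $\eps^2$ to produce the $\eps^{-3}$ factor. The only differences (summing the two identities before estimating so that $k\|\varphi_x+\psi\|^2$ recombines, and applying Young at the level of squares for the thermal brackets instead of bounding them by $c\|U\|_\H\|\theta\|$ first) are cosmetic.
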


\begin{proof}
Multiplying \eqref{R2} by $\varphi$ in $L^2$ and exploiting \eqref{R1}, we readily infer that
\begin{equation*}\label{lema-Phi-eq2}
\rho_1 \|\Phi\|^2=	 -\rho_1 \langle \hat{\Phi}, \varphi \rangle- \rho_1\langle \Phi, \hat{\varphi}\rangle
 +k\langle \varphi_x+\psi, \varphi_x \rangle - \gamma\langle \theta, \varphi_x\rangle.
\end{equation*}
Invoking Lemma \ref{THETA+XI},
for every $\varepsilon\in (0,1)$ we have
\begin{align*}
\nonumber \rho_1\|\Phi\|^2&\leq c\|U\|_\H \|\widehat U\|_\H+ c\|U\|_\H\|\varphi_x+\psi\| +c\|U\|_\H\|\theta\| \\
\noalign{\vskip1mm}
\nonumber&\leq c\|U\|_\H \|\widehat U\|_\H +c\|U\|_\H\|\varphi_x+\psi\|
+c\|U\|_\H\left[\varepsilon \|U\|_\H +\dfrac{1}{\varepsilon}\sqrt{\|U\|_\H\|\widehat{U}\|_\H} \right]   \\
&\leq c\varepsilon\|U\|_\H^2 +c\|U\|_\H\|\varphi_x+\psi\|+\dfrac{c}{\varepsilon^3}\|U\|_\H \|\widehat{U}\|_\H,
\end{align*}
where $c>0$ is independent of $\lambda$ and $\eps$. Now, we multiply \eqref{R4} by $\psi$ in $L^2$
and we use \eqref{R3} to get
\begin{equation*}
\rho_2\|\Psi\|^2=-\rho_2 \langle\hat \Psi,\psi\rangle-\rho_2 \langle\Psi,\hat{\psi}\rangle  +b\|\psi_x\|^2
+k\langle\varphi_{x} +\psi,\psi\rangle -\gamma\langle\theta,\psi\rangle - \sigma \langle \xi,\psi_x\rangle.
\end{equation*}
Again, an exploitation of Lemma \ref{THETA+XI} yields
\begin{align*}
\rho_2\|\Psi\|^2&\leq c\|U\|_\H \|\widehat U\|_\H+ c\|U\|_\H\|\psi_x\|
+c\|U\|_\H\|\varphi_x+\psi\| +c\|U\|_\H\big[\|\theta\|+\|\xi\|\big]\\
\noalign{\vskip1mm}
&\leq c\|U\|_\H \|\widehat U\|_\H+c \|U\|_\H\big[\|\varphi_x+\psi\|
+\|\psi_x\| \big]+c\|U\|_\H\left[\varepsilon\|U\|_\H
+\dfrac{1}{\varepsilon}\sqrt{\|U\|_\H\|\widehat{U}\|_\H} \right]   \\
&\leq c \varepsilon\|U\|_\H^2 +c\|U\|_\H\big[\|\varphi_x+\psi\|
+\|\psi_x\|\big] +\dfrac{c}{\varepsilon^3}\|U\|_\H \|\widehat{U}\|_\H
\end{align*}
for every $\varepsilon\in (0,1)$, where as before $c>0$ is independent of $\lambda$ and $\eps$.
Taking the sum of the two estimates obtained so far, we reach the thesis.
\end{proof}

We now need to control the variables $\varphi_x+\psi$ and $\psi_x$. To this end,
we introduce the functions
$$
\alpha(x)=\int_{0}^{\infty}\mu(s)\eta_x(x,s)\d s\qquad \mbox{and}
\qquad \beta(x)=\int_{0}^{\infty}\nu(s)\zeta_x(x,s)\d s,
$$
and we set
\begin{align*}
\mathcal{P}(\varphi,\psi,\alpha) &=|\alpha(L)| |\varphi_x(L)+\psi(L)|+ |\alpha(0)| |\varphi_x(0)+\psi(0)|,\\
\mathcal{Q}(\psi,\beta) &= |\beta(L)| |\psi_x(L)|+ |\beta(0)| |\psi_x(0)|.
\end{align*}
For the b.c.\ \eqref{BCM-beg} one has
$\mathcal{P}(\varphi,\psi,\alpha)=
\mathcal{Q}(\psi,\beta)=0$, but this is not the case for the b.c.~\eqref{BCD-beg}.

\begin{Lemma}\label{lemma-varphi_x+psi}
For every $\lambda\neq0$ and every $\eps \in (0,1)$,
the inequality
$$
k\|\varphi_x+ \psi\|^2 + b \|\psi_x\|^2 \leq c\eps\|U\|_\H^2
+\frac{c}{\eps}\bigg[\frac{1}{|\lambda|^2}+1\bigg]\|U\|_\H\|\widehat U\|_\H
+\frac{c}{|\lambda|}\big[\mathcal{P}(\varphi,\psi,\alpha) + \mathcal{Q}(\psi,\beta)\big]
$$
holds for some structural constant $c>0$ independent of $\lambda$ and $\eps$ .
\end{Lemma}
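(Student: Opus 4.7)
The plan is to derive separate bounds for $k\|\varphi_x+\psi\|^2$ and $b\|\psi_x\|^2$ by symmetric multiplier arguments starting respectively from the thermal equations \eqref{R5} and \eqref{R7}, and then sum them. Consider first the shear estimate. Combining \eqref{R1} and \eqref{R3} one obtains the identity $\Phi_x+\Psi=\i\lambda(\varphi_x+\psi)-(\hat\varphi_x+\hat\psi)$; writing $\alpha(x)=\int_0^\infty\mu(s)\eta_x(x,s)\,\d s$ so that $\int_0^\infty\mu(s)\eta_{xx}(s)\,\d s=\alpha_x$, equation \eqref{R5} can therefore be rearranged (upon dividing by $\i\lambda$, which is the role of the assumption $\lambda\neq 0$) to express $\gamma(\varphi_x+\psi)$ in terms of $\alpha_x$, $\theta$, $\hat\theta$ and $\hat\varphi_x+\hat\psi$. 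Testing this identity against $\overline{k(\varphi_x+\psi)}$ in $L^2$ then produces $k\|\varphi_x+\psi\|^2$ on the left-hand side.

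The decisive term is the one containing $\langle\alpha_x,\varphi_x+\psi\rangle$. Integration by parts in $x$ produces a boundary contribution bounded by $\frac{c}{|\lambda|}\mathcal{P}(\varphi,\psi,\alpha)$ and an interior term $-\langle\alpha,(\varphi_x+\psi)_x\rangle$, inside which I substitute $(\varphi_x+\psi)_x=\frac{1}{k}[\i\lambda\rho_1\Phi+\gamma\theta_x-\rho_1\hat\Phi]$ from \eqref{R2}. The crucial cancellation is that the $\i\lambda$ prefactor vanishes against the $\i\lambda\rho_1\Phi$ piece, leaving the $O(1)$ pairing $\langle\alpha,\Phi\rangle$, bounded via Cauchy--Schwarz and Young as $c\|\alpha\|\|\Phi\|\leq\eps\|\Phi\|^2+\frac{c}{\eps}\|\alpha\|^2\leq c\eps\|U\|_\H^2+\frac{c}{\eps}\|U\|_\H\|\widehat U\|_\H$, courtesy of Lemma \ref{ETA+ZETA}. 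The $\theta_x$-subterm still carries a $1/|\lambda|$, but the compensating $|\lambda|$-growth of $\|\theta_x\|$ from Lemma \ref{Theta-aux} delivers exactly an $O(1)$ contribution that fits inside $\frac{c}{\eps}(1+1/|\lambda|^2)\|U\|_\H\|\widehat U\|_\H$ after the AM--GM estimate $1/|\lambda|\leq\tfrac12(1+1/|\lambda|^2)$. The remaining $\theta$-term $\langle\theta,\varphi_x+\psi\rangle$ is handled by writing $c\|\theta\|\|\varphi_x+\psi\|\leq\tfrac{k}{4}\|\varphi_x+\psi\|^2+c\|\theta\|^2$, absorbing the first summand on the left and controlling the second through Lemma \ref{THETA+XI}; all other lower-order terms contribute at most $\frac{c}{|\lambda|}\|U\|_\H\|\widehat U\|_\H$, which is compliant with the target form via the same AM--GM step.

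The bending estimate is obtained by an analogous procedure starting from \eqref{R7}, the identity $\Psi_x=\i\lambda\psi_x-\hat\psi_x$ from \eqref{R3}, integration by parts on $\langle\beta_x,\psi_x\rangle$ with $\beta(x)=\int_0^\infty\nu(s)\zeta_x(x,s)\,\d s$ (producing the boundary term $\frac{c}{|\lambda|}\mathcal{Q}(\psi,\beta)$), and substitution of $\psi_{xx}$ from \eqref{R4}. The only structural novelty is the cross-term $-\frac{\varpi_2 k}{\i\lambda\sigma}\langle\beta,\varphi_x+\psi\rangle$ coming from the $k(\varphi_x+\psi)$ piece of \eqref{R4}, which I bound by $\delta k\|\varphi_x+\psi\|^2+\frac{c}{\delta|\lambda|^2}\|U\|_\H\|\widehat U\|_\H$; after summing with the shear estimate and choosing $\delta$ small, this summand is absorbed together with the other $\|\varphi_x+\psi\|^2$ and $\|\psi_x\|^2$ pieces on the left.

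The main technical obstacle is the careful bookkeeping of Young's inequalities so that the precise dependence $c\eps\|U\|_\H^2+\frac{c}{\eps}(1+1/|\lambda|^2)\|U\|_\H\|\widehat U\|_\H$ is maintained throughout, without additional powers of $\eps^{-1}$ accumulating (which would fail to absorb in the subsequent steps of the exponential-stability argument). Structurally, everything rests on two cancellations: the $\i\lambda$ factor disappearing in the $\Phi$-subterm after the substitution from \eqref{R2}, and the compensation between the $1/|\lambda|$ prefactor arising from the division by $\i\lambda$ and the $|\lambda|$-growth of $\|\theta_x\|$, $\|\xi_x\|$ provided by Lemma \ref{Theta-aux}.
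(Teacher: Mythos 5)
Your proposal follows essentially the same route as the paper's proof: test the reduced identity obtained from \eqref{R5} (resp.\ \eqref{R7}) together with \eqref{R1}, \eqref{R3} against $k(\varphi_x+\psi)$ (resp.\ $b\psi_x$), integrate $\langle\alpha_x,\varphi_x+\psi\rangle$ and $\langle\beta_x,\psi_x\rangle$ by parts to produce the boundary terms $\mathcal{P}$, $\mathcal{Q}$ and then substitute from \eqref{R2}, \eqref{R4} so that the $\i\lambda$ factor cancels against the $\Phi$ (resp.\ $\Psi$) contribution, with Lemmas \ref{ETA+ZETA}--\ref{Theta-aux} compensating the remaining $1/|\lambda|$ against the $|\lambda|$-growth of $\|\theta_x\|$, $\|\xi_x\|$. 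The only cosmetic differences (dividing by $\i\lambda$ before testing, and absorbing the $\langle\beta,\varphi_x+\psi\rangle$ cross-term via a small parameter $\delta$ after summing rather than bounding it by $\|\zeta\|_{\N}\|U\|_\H$ directly) do not change the argument.
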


\begin{proof}
Replacing \eqref{R1} and \eqref{R3} into \eqref{R5}, we find
$$
\i\lambda \rho_3\theta - \varpi_1 \alpha_x
+\i\lambda \gamma  (\varphi_x+\psi)=\rho_3 \hat{\theta}+\gamma (\hat{\varphi_x}+\hat{\psi}).
$$
Multiplying the identity above by $k(\varphi_x + \psi)$ in $L^2$, we obtain
\begin{align}
\label{mix}
\i \lambda \gamma  k\|\varphi_x + \psi\|^2
= \underbrace{\varpi_1 k\l \alpha_{x}, \varphi_x + \psi \r}_{:=J_1}
\underbrace{-\i\lambda \rho_3 k \l \theta , \varphi_x + \psi\r}_{:=J_2}
 + k \l \rho_3 \hat  \theta
+ \gamma (\hat \varphi_x  + \hat \psi), \varphi_x + \psi\r.
\end{align}
With the aid of \eqref{R2}, we rewrite $J_1$ as
$$
J_1 =
\varpi_1 \l \alpha , \rho_1 \hat \Phi - \gamma \theta_x -
\i \lambda \rho_1 \Phi  \r  + \varpi_1 k\, \alpha \overline{(\varphi_x+\psi)}\Big|_0^L.
$$
Invoking Lemmas \ref{ETA+ZETA} and \ref{Theta-aux}, it is readily seen that
\begin{align*}
|\l \alpha , \rho_1 \hat \Phi - \gamma \theta_x -
\i \lambda \rho_1 \Phi  \r | &\leq c\|\eta\|_\M \|\theta_x\|
+ c|\lambda| \|\eta\|_\M \|\Phi\| + c  \|U\|_\H\|\widehat U\|_\H \\
&\leq c \big[1+|\lambda|\big] \|U\|_\H \|\widehat U\|_\H + c|\lambda| \|\eta\|_\M \|U\|_\H,
\end{align*}
from where we get
$$
|J_1| \leq
c \big[1+|\lambda|\big] \|U\|_\H \|\widehat U\|_\H + c|\lambda| \|\eta\|_\M\|U\|_\H
+ c\hspace{0.4mm}\mathcal{P}(\varphi,\psi,\alpha).
$$
Since $|J_2| \leq c|\lambda|\|\theta\| \|\varphi_x+\psi\|$,
it follows from \eqref{mix} together with
Lemmas \ref{ETA+ZETA}-\ref{THETA+XI},  that
\begin{align*}
2k\|\varphi_x+\psi\|^2 &\leq c \bigg[\frac{1}{|\lambda|}+1\bigg]\|U\|_\H\|\widehat U\|_\H
+ c\|\eta\|_\M\|U\|_\H + c \|\theta\| \|\varphi_x+\psi\|
+\frac{c}{|\lambda|} \mathcal{P}(\varphi,\psi,\alpha) \\
&\leq \eps \|U\|_\H^2 +c \bigg[\frac{1}{|\lambda|}+1\bigg]\|U\|_\H\|\widehat U\|_\H
+ \frac{c}{\eps} \|\eta\|_\M^2 + c \|\theta\|^2 + k \|\varphi_x+\psi\|^2
+\frac{c}{|\lambda|} \mathcal{P}(\varphi,\psi,\alpha)
\\ &\leq c\eps \|U\|_\H^2 +\frac{c}{\eps} \bigg[\frac{1}{|\lambda|^2}+1\bigg]\|U\|_\H\|\widehat U\|_\H
+ k \|\varphi_x+\psi\|^2
+\frac{c}{|\lambda|} \mathcal{P}(\varphi,\psi,\alpha)
\end{align*}
for every $\lambda \neq0$ and $\eps\in(0,1)$, where
$c>0$ is independent of $\lambda$ and $\eps$.
In conclusion,
$$
k\|\varphi_x+ \psi\|^2  \leq c\eps\|U\|_\H^2
+\frac{c}{\eps}\bigg[\frac{1}{|\lambda|^2}+1\bigg]\|U\|_\H\|\widehat U\|_\H
+\frac{c}{|\lambda|} \mathcal{P}(\varphi,\psi,\alpha).
$$
In order to prove the analogous bound for $\|\psi_x\|$, we
substitute \eqref{R3} in \eqref{R7}, getting
$$
\i\lambda \rho_4\xi - \varpi_2\beta_x
+\i\lambda\sigma \psi_x=\rho_4\hat{\xi} +\sigma \hat{\psi}_x.
$$
Multiplying such an identity by $b\psi_x$ in $L^2$, we find
\begin{equation}\label{lemma-varphi_x+psi-eq4}
\i\lambda\sigma b\|\psi_x\|^2
=\underbrace{\varpi_2 b \langle \beta_{x}, \psi_x \rangle}_{:=J'_1}
\underbrace{-\i\lambda \rho_4 b \langle \xi, \psi_x \rangle}_{:=J'_2} +
\rho_4 b\langle \hat{\xi}, \psi_x\rangle + \sigma b\langle \hat{\psi}_x, \psi_x \rangle.
\end{equation}
Using \eqref{R4}, we rewrite
$$
J'_1 = \varpi_2 \l \beta, \rho_2 \hat \Psi
+ \gamma \theta-\sigma \xi_x- k(\varphi_x+\psi)-
\i \lambda \rho_2 \Psi  \r + \varpi_2 b \hspace{0.7mm} \beta \overline{\psi_x}\big|_0^L.
$$
An exploitation of Lemmas \ref{ETA+ZETA} and \ref{Theta-aux} now yields
(cf.\ the corresponding estimate for $J_1$ above)
$$
|J'_1| \leq c \big[1+|\lambda|\big] \|U\|_\H \|\widehat U\|_\H
+ c \big[1+|\lambda|\big] \|\zeta\|_\N\|U\|_\H+c\hspace{0.4mm} \mathcal{Q}(\psi,\beta).
$$
Since
$|J'_2| \leq c|\lambda|\|\xi\| \|\psi_x\|$,
making use of Lemmas \ref{ETA+ZETA}-\ref{THETA+XI} it
follows from \eqref{lemma-varphi_x+psi-eq4} that
\begin{align*}
2b\|\psi_x\|^2 &\leq c \bigg[\frac{1}{|\lambda|}+1\bigg]\|U\|_\H\|\widehat U\|_\H
+ c\bigg[\frac{1}{|\lambda|}+1\bigg]\|\zeta\|_\N\|U\|_\H
+ c \|\xi\| \|\psi_x\|+\frac{c}{|\lambda|} \mathcal{Q}(\psi,\beta)\\
&\leq \eps \|U\|_\H^2 +c \bigg[\frac{1}{|\lambda|}+1\bigg]\|U\|_\H\|\widehat U\|_\H
+ \frac{c}{\eps}\bigg[\frac{1}{|\lambda|^2}+1\bigg] \|\zeta\|_\N^2 + c \|\xi\|^2 + b \|\psi_x\|^2 +\frac{c}{|\lambda|} \mathcal{Q}(\psi,\beta)
\\ &\leq c\eps \|U\|_\H^2 +\frac{c}{\eps} \bigg[\frac{1}{|\lambda|^2}+1\bigg]\|U\|_\H\|\widehat U\|_\H + b \|\psi_x\|^2
+\frac{c}{|\lambda|} \mathcal{Q}(\psi,\beta),
\end{align*}
for every $\lambda \neq0$ and $\eps\in(0,1)$, where
$c>0$ is independent of $\lambda$ and $\eps$. Hence,
we end up with
$$
b\|\psi_x \|^2  \leq c\eps \|U\|_\H^2
+\frac{c}{\eps}\bigg[\frac{1}{|\lambda|^2}+1\bigg]\|U\|_\H\|\widehat U\|_\H
+\frac{c}{|\lambda|} \mathcal{Q}(\psi,\beta),
$$
leading to the desired conclusion.
\end{proof}

Our final task is to control the terms
$\mathcal{P}(\varphi,\psi,\alpha)$ and
$\mathcal{Q}(\psi,\beta)$ (within the b.c.\ \eqref{BCD-beg}).

\subsection{Observability analysis}

\begin{Lemma}[Elastic observability-type inequality]\label{lemma-term-sub-front}
Consider the full Dirichlet b.c.\ \eqref{BCD-beg}. For every $\lambda\in\R$, the
following inequalities hold for some structural constant
$c>0$ independent of $\lambda$.
\begin{itemize}
\item[(i)] Defining $\mathcal{I}(\varphi,\psi)=|\varphi_x(0)+\psi(0)|^2+|\varphi_x(L)+\psi(L)|^2$ we have
$$\mathcal{I}(\varphi,\psi)\leq c\|U\|_\H^2+
c\|U\|_\H \|\widehat{U}\|_\H + c\|U\|_\H \|\theta_x\|.$$
\item[(ii)] Defining $\mathcal{J}(\psi)=|\psi_x(0)|^2+|\psi_x(L)|^2$ we have
$$\mathcal{J}(\psi)\leq c\|U\|_\H^2 + c\|U\|_\H \|\widehat{U}\|_\H + c\|U\|_\H \|\xi_x\|.$$
\end{itemize}
\end{Lemma}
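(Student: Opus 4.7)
My plan is to apply the classical multiplier method to suitable second-order resolvent equations for $\varphi$ and $\psi$. I will sketch the proof of part (i); part (ii) follows by running the same programme on the equation for $\psi$.

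First, I would eliminate $\Phi$ from \eqref{R1}--\eqref{R2}, obtaining the second-order relation
\begin{equation*}
-\rho_1\lambda^2\varphi - k(\varphi_x+\psi)_x + \gamma\theta_x = \rho_1\hat\Phi + \i\lambda\rho_1\hat\varphi.
\end{equation*}
Next, I would fix a $C^1$ multiplier $q$ on $[0,L]$ with $q(0)=-1$ and $q(L)=1$ (for instance $q(x)=(2x-L)/L$), multiply the displayed equation by $q(x)\overline{(\varphi_x+\psi)}$, integrate on $(0,L)$, and take the real part. Using $\psi(0)=\psi(L)=0$ together with the standard identity $\Re\int(\varphi_x+\psi)_x\,q\,\overline{(\varphi_x+\psi)}\,\d x = \tfrac{1}{2}\mathcal{I}(\varphi,\psi) - \tfrac{1}{2}\int q'|\varphi_x+\psi|^2\d x$, the principal contribution produces exactly $\tfrac{k}{2}\mathcal{I}(\varphi,\psi)$ on the left, while $\tfrac{k}{2}\int q'|\varphi_x+\psi|^2$ is absorbed into $c\|U\|_\H^2$.

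It then remains to bound the integrals of the remaining terms of the equation paired with $q\overline{(\varphi_x+\psi)}$. The $\gamma\theta_x$ and $\rho_1\hat\Phi$ pieces are immediately controlled by Cauchy--Schwarz, producing the summands $c\|U\|_\H\|\theta_x\|$ and $c\|U\|_\H\|\widehat U\|_\H$ respectively. The delicate pieces are those involving $\lambda$: for $-\rho_1\lambda^2\varphi$ I would split $\int\varphi q\overline{(\varphi_x+\psi)}$ as $\int\varphi q\overline{\varphi_x}+\int\varphi q\bar\psi$, integrate the first by parts exploiting the full Dirichlet condition on $\varphi$, and then convert the two factors of $\lambda$ into velocity terms via the identities $\lambda\varphi=-\i(\Phi+\hat\varphi)$ and $\lambda\psi=-\i(\Psi+\hat\psi)$ coming from \eqref{R1} and \eqref{R3}; for the $\i\lambda\rho_1\hat\varphi$ piece I would again split and use $\i\lambda\overline{\varphi_x}=-\overline{(\Phi+\hat\varphi)_x}$ (obtained by differentiating \eqref{R1} in $x$), followed by integration by parts---the boundary contributions vanishing because $\hat\varphi,\Phi\in H^1_0$---so that the surviving integrals reduce to the desired $c\|U\|_\H\|\widehat U\|_\H$ terms, plus a harmless $c\|\widehat U\|_\H^2$ which is absorbed.

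The same strategy carries out part (ii) starting from the second-order equation obtained by eliminating $\Psi$ in \eqref{R4}, namely
\begin{equation*}
-\rho_2\lambda^2\psi - b\psi_{xx} + k(\varphi_x+\psi) - \gamma\theta + \sigma\xi_x = \rho_2\hat\Psi + \i\lambda\rho_2\hat\psi,
\end{equation*}
multiplied by $q(x)\overline{\psi_x}$; now the companion identity $\Re\int\psi_{xx}\,q\,\overline{\psi_x}\,\d x = \tfrac{1}{2}\mathcal{J}(\psi)-\tfrac{1}{2}\int q'|\psi_x|^2\,\d x$ produces the desired boundary quantity (with the correct sign after rearrangement), the $\sigma\xi_x$ piece supplies the $c\|U\|_\H\|\xi_x\|$ contribution, and the couplings $k(\varphi_x+\psi)$ and $-\gamma\theta$ are absorbed into $c\|U\|_\H^2$. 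The main obstacle, which the whole plan above is designed to overcome, is the systematic removal of the $|\lambda|$-factors: a direct bound on the terms $-\rho_1\lambda^2\varphi\cdot q\overline{(\varphi_x+\psi)}$ and $\i\lambda\rho_1\hat\varphi\cdot q\overline{(\varphi_x+\psi)}$ would blow up as $|\lambda|\to\infty$, ruining any application of the Gearhart--Prüss criterion; only the careful use of \eqref{R1} and \eqref{R3} to trade each $\lambda$ for an $L^2$ velocity (plus a datum) makes the observability estimate close uniformly in $\lambda$.
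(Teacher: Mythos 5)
Your proof is essentially the paper's own argument: the same affine multiplier vanishing at the midpoint (your $q=2z/L$ with $z(x)=x-L/2$), the same identity producing $\tfrac{k}{2}\mathcal{I}$ resp.\ $\tfrac{b}{2}\mathcal{J}$ as boundary contributions, and the same use of \eqref{R1}, \eqref{R3} to trade powers of $\lambda$ for velocities. The only real difference is that you first eliminate $\Phi$ (resp.\ $\Psi$) and work with the second-order equation, whereas the paper multiplies \eqref{R2} (resp.\ \eqref{R4}) directly and substitutes $\i\lambda(\varphi_x+\psi)=\Phi_x+\hat\varphi_x+\Psi+\hat\psi$ into the \emph{single} remaining factor of $\i\lambda\rho_1\langle\Phi,z(\varphi_x+\psi)\rangle$.

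That difference has one concrete consequence you should not gloss over. In your version the term $-\rho_1\lambda^2\Re\int\varphi\,q\,\overline{\varphi_x}\,\d x$ becomes, after integration by parts, $\tfrac{\rho_1}{2}\int q'\,|\lambda\varphi|^2\d x$ with $\lambda\varphi=-\i(\Phi+\hat\varphi)$, so \emph{both} factors get converted and you pick up $|\hat\varphi|^2$ (similarly $\hat\varphi\overline{\hat\psi}$, $\hat\psi\,\overline{\hat\psi_x}$, etc.). The resulting $c\|\widehat U\|_\H^2$ cannot be ``absorbed'' into the stated right-hand side $c\|U\|_\H^2+c\|U\|_\H\|\widehat U\|_\H+c\|U\|_\H\|\theta_x\|$: there is no uniform-in-$\lambda$ bound of $\|\widehat U\|_\H$ by $\|U\|_\H$ for the resolvent equation. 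So as written you prove a strictly weaker inequality than the one claimed. This is not fatal for the paper's programme (an extra $c\|\widehat U\|_\H^2$ propagates harmlessly through Lemmas \ref{lemma-term-front} and \ref{corol-varphix-psi-final} and still yields $\|U\|_\H\leq c\|\widehat U\|_\H$ at the end), but to obtain the lemma exactly as stated you should keep the first-order form, i.e.\ start from $\i\lambda\rho_1\langle\Phi,z(\varphi_x+\psi)\rangle$ and apply $\i\lambda$ only to the factor $\varphi_x+\psi$; then the worst terms are of the type $\langle\Phi,z\hat\varphi_x\rangle$ and one only ever sees $\|U\|_\H\|\widehat U\|_\H$.
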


\begin{proof}
Setting $z(x)=(x-\sfrac{L}{2})$
we multiply \eqref{R2} by $z(\varphi_x+\psi)$ in $L^2$. Taking real part of the resulting equality, we obtain
$$
\underbrace{\Re\big[\i\lambda \rho_1 \langle \Phi, z (\varphi_x+\psi)\rangle
- k \langle (\varphi_x+\psi)_x, z (\varphi_x+\psi)\rangle\big]}_{:=P_1}
+\gamma \Re \langle \theta_x, z (\varphi_x+\psi) \rangle
= \rho_1\Re\langle \hat{\Phi}, z (\varphi_x+\psi)\rangle.
$$
Substituting ${\varphi}$ and ${\psi}$ given by \eqref{R1} and \eqref{R3} into $P_1$ and
after an elementary calculation, we find
$$
P_1 = \dfrac{\rho_1}{2}\|\Phi\|^2+\dfrac{k}{2}\|\varphi_x+\psi\|^2
- \rho_1\Re\langle \Phi, z \Psi\rangle
- \rho_1\Re\langle \Phi, z (\hat{\varphi}_x + \hat{\psi})\rangle
- \dfrac{kL}{4}\mathcal{I}(\varphi,\psi).
$$
Therefore, we get the identity
\begin{align*}
\mathcal{I}(\varphi,\psi) &= \dfrac{2\rho_1}{kL}\|\Phi\|^2+\dfrac{2}{L}\|\varphi_x+\psi\|^2
-\dfrac{4\rho_1}{kL}\Re\langle \Phi, z\Psi\rangle
 +\dfrac{4\gamma}{kL} \Re \langle \theta_x, z (\varphi_x+\psi)\rangle \\\noalign{\vskip0.8mm}
&\quad -\dfrac{4\rho_1}{kL} \Re\langle \hat{\Phi},z (\varphi_x+\psi)\rangle
- \dfrac{4\rho_1}{kL} \Re\langle \Phi,z (\hat{\varphi}_x + \hat{\psi})\rangle.
\end{align*}
Since the modulus of the right-hand side is less than or equal to
$$
c\|U\|_\H^2+
c\|U\|_\H \|\widehat{U}\|_\H + c\|U\|_\H\|\theta_x\|
$$
the proof of item (i) is finished.

In order to prove item (ii), we multiply \eqref{R4} by $z \psi_x$ in $L^2$.
Taking real part of the resulting identity, we arrive at
$$
\underbrace{\Re\big[\i \lambda \rho_2\langle\Psi, z\psi_x\rangle
-b\langle \psi_{xx}, z \psi_x\rangle\big]}_{:=P_2}
+k\Re\langle \varphi_x+\psi, z \psi_x\rangle
+  \Re\langle \sigma\xi_x - \gamma \theta, z \psi_x\rangle
= \rho_2\Re\langle \hat{\Psi}, z \psi_x\rangle.
$$
Inserting $\psi$ given by \eqref{R3} in $P_2$, after an elementary calculation we infer that
$$
P_2=\dfrac{\rho_2}{2}\|\Psi\|^2 +
\dfrac{b}{2}\|\psi_x\|^2-\rho_2\Re\langle \Psi,z\hat{\psi}_x\rangle
-\dfrac{b L}{4}\mathcal{J}(\psi).
$$
Thus, we have
\begin{align*}
\mathcal{J}(\psi) &=
\dfrac{2\rho_2}{b L}\|\Psi\|^2
+  \dfrac{2}{L}\|\psi_x\|^2 +\dfrac{4k}{bL}\Re\langle \varphi_x+\psi,z\psi_x\rangle
+ \dfrac{4\sigma}{bL}\Re\langle \xi_x , z\psi_x\rangle
\\\noalign{\vskip0.4mm}
&\quad - \dfrac{4\gamma}{bL}\Re\langle \theta , z\psi_x\rangle
-\dfrac{4\rho_2}{bL}\Re\langle \hat{\Psi}, z\psi_x\rangle
-\dfrac{4\rho_2}{bL}\Re\langle \Psi,z\hat{\psi}_x\rangle.
\end{align*}
Exploiting the equality above, we readily end up with the desired estimate
$$
\mathcal{J}(\psi)\leq c\|U\|_\H^2+
c\|U\|_\H \|\widehat{U}\|_\H + c\|U\|_\H\|\xi_x\|.
$$
The lemma has been proved.
\end{proof}

\begin{Lemma}[Viscoelastic observability-type inequality]
\label{lemma-term-front}
Consider the full Dirichlet b.c.\ \eqref{BCD-beg}.
For every $\lambda\in\R$ and every $\eps\in(0,1)$, the inequalities
\begin{align*}
\mathcal{P}(\varphi,\psi,\alpha)
&\leq c\eps\|U\|^2_\H + c\eps\|U\|_\H\|\theta_x\|
+ \dfrac{c}{\varepsilon}|\lambda|\|\eta\|_{\cal M}\|U\|_\H
+\dfrac{c}{\varepsilon}\|U\|_\H \|\widehat{U}\|_\H\\\noalign{\vskip0.3mm}
\mathcal{Q}(\psi,\beta)&\leq
c\eps\|U\|^2_\H + c\varepsilon \|U\|_\H \|\xi_x\|
+ \dfrac{c}{\varepsilon}|\lambda|\|\zeta\|_{\cal N}\|U\|_\H
 +\dfrac{c}{\varepsilon}\|U\|_\H\|\widehat{U}\|_\H	
\end{align*}
hold for some structural constant $c>0$ independent of $\lambda$ and $\eps$.
\end{Lemma}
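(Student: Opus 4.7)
The plan is to reduce both estimates to bounds on the viscoelastic boundary traces $|\alpha(0)|^2+|\alpha(L)|^2$ and $|\beta(0)|^2+|\beta(L)|^2$ via Young's inequality, and then to control those traces by a weighted multiplier applied to the resolvent equations \eqref{R5} and \eqref{R7}.

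First I would split each summand of $\mathcal{P}$ and $\mathcal{Q}$ via Young's inequality to obtain
\begin{equation*}
\mathcal{P}(\varphi,\psi,\alpha) \leq \tfrac{1}{2\eps}\bigl[|\alpha(0)|^2+|\alpha(L)|^2\bigr] + \tfrac{\eps}{2}\,\mathcal{I}(\varphi,\psi), \qquad \mathcal{Q}(\psi,\beta) \leq \tfrac{1}{2\eps}\bigl[|\beta(0)|^2+|\beta(L)|^2\bigr] + \tfrac{\eps}{2}\,\mathcal{J}(\psi).
\end{equation*}
Applying Lemma \ref{lemma-term-sub-front} to the elastic traces immediately converts the second summands into the $c\eps\|U\|_\H^2$, $c\eps\|U\|_\H\|\theta_x\|$ and $c\eps\|U\|_\H\|\xi_x\|$ pieces of the target (the cross contribution $c\eps\|U\|_\H\|\widehat U\|_\H$ is absorbed into $\tfrac{c}{\eps}\|U\|_\H\|\widehat U\|_\H$ since $\eps\in(0,1)$). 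The remaining task is to show
\begin{equation*}
|\alpha(0)|^2+|\alpha(L)|^2 \leq c\|U\|_\H\|\widehat U\|_\H + c|\lambda|\|U\|_\H\|\eta\|_\M,
\end{equation*}
together with the analogous bound for $\beta$ with $\|\zeta\|_\N$ in place of $\|\eta\|_\M$.

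For the $\alpha$-trace I would use the multiplier $z(x)=x-L/2$, exploiting the elementary identity
\begin{equation*}
\frac{L}{4}\bigl[|\alpha(0)|^2+|\alpha(L)|^2\bigr] = \frac{1}{2}\|\alpha\|^2 + \Re\int_0^L z\,\alpha_x\,\bar{\alpha}\,\d x.
\end{equation*}
The main obstacle is that substituting $\alpha_x$ directly from \eqref{R5} introduces a $\Phi_x$ contribution which cannot be dominated by $\|U\|_\H$. The trick is to first eliminate $\Phi_x+\Psi$ by substituting \eqref{R1} and \eqref{R3} into \eqref{R5}, recasting the latter as $\varpi_1\alpha_x = \i\lambda\rho_3\theta + \i\lambda\gamma(\varphi_x+\psi) - \gamma(\hat\varphi_x+\hat\psi) - \rho_3\hat\theta$, in which every factor is bounded by $\|U\|_\H$ or $\|\widehat U\|_\H$ (note that $\hat\varphi_x,\hat\psi\in L^2$ because the first and third components of $\widehat U$ lie in $H_0^1$ under \eqref{BCD-beg}). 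A Cauchy-Schwarz estimate on each resulting integral produces contributions of the form $c|\lambda|\|U\|_\H\|\alpha\|$ (from the $\i\lambda$-terms) and $c\|\widehat U\|_\H\|\alpha\|$ (from the data). Finally, the bound $\|\alpha\|\leq c\|\eta\|_\M$ combined with Lemma \ref{ETA+ZETA}, which yields $\|\alpha\|^2\leq c\|\eta\|_\M^2\leq c\|U\|_\H\|\widehat U\|_\H$, closes the trace estimate.

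The argument for the $\beta$-trace is entirely parallel: substituting $\Psi_x = \i\lambda\psi_x-\hat\psi_x$ from \eqref{R3} into \eqref{R7} recasts it as $\varpi_2\beta_x = \i\lambda\rho_4\xi + \i\lambda\sigma\psi_x - \sigma\hat\psi_x - \rho_4\hat\xi$, after which multiplying by $z\bar\beta$ and repeating the same bookkeeping (this time with $\|\beta\|\leq c\|\zeta\|_\N$) delivers the symmetric bound $|\beta(0)|^2+|\beta(L)|^2 \leq c\|U\|_\H\|\widehat U\|_\H + c|\lambda|\|U\|_\H\|\zeta\|_\N$. Plugging the two trace bounds into the reductions of the first step then yields both inequalities of the lemma.
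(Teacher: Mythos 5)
Your proof is correct and follows essentially the same route as the paper's: both reduce $\mathcal{P}$ and $\mathcal{Q}$ via Young's inequality to $\eps\,\mathcal{I}(\varphi,\psi)$ (resp.\ $\eps\,\mathcal{J}(\psi)$) plus a trace bound of the form $c\|\alpha\|^2+c\|\alpha\|\|\alpha_x\|$, and both eliminate $\Phi_x+\Psi$ from \eqref{R5} (resp.\ $\Psi_x$ from \eqref{R7}) via \eqref{R1} and \eqref{R3} before estimating $\|\alpha_x\|$ and $\|\beta_x\|$, closing with $\|\alpha\|\leq c\|\eta\|_\M$, Lemma \ref{ETA+ZETA} and Lemma \ref{lemma-term-sub-front}. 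The only cosmetic difference is that you obtain the boundary-trace bound from the explicit multiplier identity with $z(x)=x-L/2$, whereas the paper invokes the Gagliardo--Nirenberg interpolation inequality for $\|\alpha\|_{L^\infty(0,L)}$; the two are equivalent here.
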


\begin{proof}
Exploiting the Gagliardo-Nirenberg interpolation inequality (see e.g.\ \cite[p.\ 233]{brezis2010}),
we have
$$
\mathcal{P}(\varphi,\psi,\alpha)\leq c\|\alpha\|_{L^\infty(0,L)}
\sqrt{\mathcal{I}(\varphi,\psi)} \leq
\eps\mathcal{I}(\varphi,\psi) + \dfrac{c}{\varepsilon}\|\alpha\|_{L^\infty(0,L)}^2
 \leq \eps\mathcal{I}(\varphi,\psi) + \dfrac{c}{\varepsilon}\|\alpha\|^2
 +\dfrac{c}{\varepsilon}\|\alpha\| \|\alpha_x\|
$$
for every $\eps\in(0,1)$,
where $\mathcal{I}(\psi,\varphi)$ is given by Lemma \ref{lemma-term-sub-front}
and $c>0$ is independent of $\lambda$ and $\eps$.
On the other hand, combining equations \eqref{R1}, \eqref{R3} and \eqref{R5}, we can write
$$
\varpi_1 \alpha_x =
\i\lambda\rho_3\theta + \i\lambda \gamma(\varphi_x+\psi)
-\gamma(\hat{\varphi}_x+\hat{\psi})-\rho_3\hat{\theta},
$$
which yields the bound
$$
\|\alpha_x\|\leq c|\lambda|\big[\|\theta\| +\|\varphi_x+\psi\|\big] +c \|\widehat{U}\|_\H.
$$
Since $\|\alpha\|\leq c\|\eta\|_{\mathcal{M}}$, we finally obtain
\begin{align*}
\mathcal{P}(\varphi,\psi,\alpha) &\leq
\eps\mathcal{I}(\varphi,\psi) +\dfrac{c}{\varepsilon} \|\eta\|^2_{\cal M}
+\dfrac{c}{\varepsilon}|\lambda|\|\eta\|_{\cal M}\big[\|\theta\|
+\|\varphi_x+\psi\|\big]+\dfrac{c}{\varepsilon}\|\eta\|_{\cal M}\|\widehat{U}\|_\H\\\noalign{\vskip0.7mm}
&\leq c\eps\|U\|^2_\H + c\eps\|U\|_\H\|\theta_x\| + \dfrac{c}{\varepsilon}|\lambda|\|\eta\|_{\cal M}\|U\|_\H
+\dfrac{c}{\varepsilon}\|U\|_\H \|\widehat{U}\|_\H,
\end{align*}
where the second inequality follows from Lemmas \ref{ETA+ZETA} and \ref{lemma-term-sub-front}.

We are left to prove the analogous bound for $\mathcal{Q}(\psi,\beta)$. To this end,
exploiting again the Gagliardo-Nirenberg interpolation inequality and arguing exactly as above, we find
$$\mathcal{Q}(\psi,\beta)
\leq \varepsilon\mathcal{J}(\psi) + \dfrac{c}{\varepsilon}\|\beta\|^2
+\dfrac{c}{\varepsilon}\|\beta\| \|\beta_x\|
$$
for every $\eps\in(0,1)$, where $\mathcal{J}(\psi)$ is given by Lemma \ref{lemma-term-sub-front}
and $c>0$ is independent of $\lambda$ and $\eps$.
Additionally, combining equations \eqref{R3} and \eqref{R7}, we promptly have
\begin{equation*}\label{lemma-term-front-eq9}
\varpi_2 \beta_x= \i \lambda \rho_4\xi +\i\lambda\sigma \psi_x
- \sigma \hat{\psi}_x-\rho_4 \hat{\xi}.
\end{equation*}
Consequently, we can write
$$
\|\beta_x\| \leq c|\lambda|\big[\|\xi\| +\|\psi_x\|\big] +c \|\widehat{U}\|_\H.
$$
Due to the fact that $\|\beta\|\leq c\|\zeta\|_{\mathcal{N}}$, we end up with
\begin{align*}
\mathcal{Q}(\psi,\beta) &\leq
\eps\mathcal{J}(\psi) +\dfrac{c}{\varepsilon} \|\zeta\|^2_{\cal N}
+\dfrac{c}{\varepsilon}|\lambda|\|\zeta\|_{\cal N}\big[\|\xi\|
+\|\psi_x\|\big]+\dfrac{c}{\varepsilon}\|\zeta\|_{\cal N}\|\widehat{U}\|_\H\\\noalign{\vskip0.7mm}
&\leq c\eps\|U\|^2_\H + c\eps\|U\|_\H \|\xi_x\|
+ \dfrac{c}{\varepsilon}|\lambda|\|\zeta\|_{\cal N}\|U\|_\H
+\dfrac{c}{\varepsilon}\|U\|_\H \|\widehat{U}\|_\H,
\end{align*}
where the second inequality follows from Lemmas \ref{ETA+ZETA} and \ref{lemma-term-sub-front}.
The proof is over.
\end{proof}

We finally obtain the following estimate for the terms $\varphi_x+\psi$ and $\psi_x$.

\begin{Lemma}\label{corol-varphix-psi-final}
For every $\lambda\ne 0$ and every $\eps \in (0,1)$, the inequality
$$
k\|\varphi_x+ \psi\|^2 + b \|\psi_x\|^2 \leq c\eps\bigg[\frac{1}{|\lambda|}+1\bigg]\|U\|_\H^2 +
\frac{c}{\eps^3}\bigg[\frac{1}{|\lambda|^2}+1\bigg]\|U\|_\H\|\widehat U\|_\H.
$$
holds for some structural constant
$c>0$ independent of $\lambda$ and $\eps$.
\end{Lemma}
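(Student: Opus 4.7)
The plan is to combine Lemma \ref{lemma-varphi_x+psi} with the observability bounds of Lemma \ref{lemma-term-front}. For the mixed boundary conditions \eqref{BCM-beg} the boundary terms $\mathcal{P}(\varphi,\psi,\alpha)$ and $\mathcal{Q}(\psi,\beta)$ vanish identically, so Lemma \ref{lemma-varphi_x+psi} alone already yields the target (in fact with a better dependence on $\eps$). The real work lies in the full Dirichlet case \eqref{BCD-beg}, and here I would simply feed the bounds of Lemma \ref{lemma-term-front} for $\mathcal{P}$ and $\mathcal{Q}$ into the right-hand side of Lemma \ref{lemma-varphi_x+psi}, after dividing them by $|\lambda|$. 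A pleasant cancellation occurs at this point: the apparently dangerous terms $\frac{c}{\eps}|\lambda|\|\eta\|_{\cal M}\|U\|_\H$ and $\frac{c}{\eps}|\lambda|\|\zeta\|_{\cal N}\|U\|_\H$ in Lemma \ref{lemma-term-front}, once multiplied by $\frac{1}{|\lambda|}$, lose their spectral factor and reduce to quantities independent of $\lambda$.

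After this substitution, the remaining mixed terms are of two types: those containing $\|\eta\|_{\cal M}+\|\zeta\|_{\cal N}$, and those containing $\|\theta_x\|+\|\xi_x\|$. I would control the former via Lemma \ref{ETA+ZETA}, which gives $\|\eta\|_{\cal M}+\|\zeta\|_{\cal N}\leq c\sqrt{\|U\|_\H\|\widehat U\|_\H}$, and the latter via Lemma \ref{Theta-aux}, which gives $\|\theta_x\|+\|\xi_x\|\leq c[1+|\lambda|]\sqrt{\|U\|_\H\|\widehat U\|_\H}+c\|\widehat U\|_\H$. Both estimates produce terms of the shape $\|U\|_\H^{3/2}\|\widehat U\|_\H^{1/2}$ on the right-hand side; in the $\|\theta_x\|$-estimate, the factor $1+|\lambda|$ is divided by the $|\lambda|$ already present in front, producing a manageable $\frac{1}{|\lambda|}+1$ prefactor.

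To finish, I would split each $\|U\|_\H^{3/2}\|\widehat U\|_\H^{1/2}$ by writing it as $\|U\|_\H\cdot(\|U\|_\H\|\widehat U\|_\H)^{1/2}$ and applying Young's inequality $ab\leq \delta a^2+\frac{1}{4\delta}b^2$, which gives $\delta\|U\|_\H^2+\frac{c}{\delta}\|U\|_\H\|\widehat U\|_\H$. The tightest choice occurs when such a term carries a prefactor $\frac{1}{\eps}$ (as happens for the $\|\eta\|_{\cal M}$ contribution): to absorb this prefactor into a genuine $\eps\|U\|_\H^2$ bound, one must choose $\delta=\eps^2$, and this is exactly what manufactures the $\frac{1}{\eps^3}$ coefficient in the target estimate. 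Finally, since $\frac{1}{|\lambda|}+1\leq 2\big(\frac{1}{|\lambda|^2}+1\big)$ for every $\lambda\neq 0$, any residual $\frac{1}{|\lambda|}+1$ factor attached to the $\frac{1}{\eps^3}\|U\|_\H\|\widehat U\|_\H$ term folds into the desired coefficient $\frac{1}{|\lambda|^2}+1$.

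I do not expect a genuine analytical obstacle here: all the nontrivial ingredients have already been established in the preceding lemmas, and the present statement is essentially a combination step. The main technical subtlety is purely bookkeeping, namely tracking the interactions between the powers of $\eps$ and of $\frac{1}{|\lambda|}$ so that the two coefficients in the final inequality line up precisely as $c\eps\big[\frac{1}{|\lambda|}+1\big]$ and $\frac{c}{\eps^3}\big[\frac{1}{|\lambda|^2}+1\big]$.
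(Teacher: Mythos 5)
Your proposal is correct and follows essentially the same route as the paper's own proof: both handle the mixed boundary conditions trivially via Lemma \ref{lemma-varphi_x+psi}, and in the Dirichlet case both insert the bounds of Lemma \ref{lemma-term-front} into the $\frac{c}{|\lambda|}\big[\mathcal{P}+\mathcal{Q}\big]$ term, control the memory and temperature-gradient factors by Lemmas \ref{ETA+ZETA} and \ref{Theta-aux}, and absorb the resulting $\|U\|_\H^{3/2}\|\widehat U\|_\H^{1/2}$ terms by Young's inequality with the weight that produces the $\eps^{-3}$ coefficient. Your bookkeeping of the $|\lambda|$-cancellation and of the passage from $\frac{1}{|\lambda|}+1$ to $\frac{1}{|\lambda|^2}+1$ matches the paper exactly.
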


\begin{proof}
For the b.c.\ \eqref{BCM-beg} the result follows immediately from Lemma \ref{lemma-varphi_x+psi}
(recall that in this situation $\mathcal{P}(\varphi,\psi,\alpha)
=\mathcal{Q}(\psi,\beta)=0$). Hence, we only need to treat the b.c.\ \eqref{BCD-beg}.
By Lemma \ref{lemma-term-front}, we have
\begin{align*}
\mathcal{P}(\varphi,\psi,\alpha)
+ \mathcal{Q}(\psi,\beta) &\leq
c\eps\|U\|^2_\H + c\eps\big[\|\theta_x\|
+\|\xi_x\|\big]\|U\|_\H\\
&\quad + \dfrac{c}{\varepsilon}|\lambda|\big[\|\eta\|_{\cal M}+
\|\zeta\|_{\cal N}\big]\|U\|_\H
+\dfrac{c}{\varepsilon}\|U\|_\H \|\widehat{U}\|_\H.
\end{align*}
In the light of Lemmas \ref{ETA+ZETA} and \ref{Theta-aux}, the right-hand
side above is less than or equal to
$$
c\eps\|U\|^2_\H + c\eps\big[1+|\lambda|\big] \|U\|_\H \sqrt{\|U\|_\H \|\widehat{U}\|_\H}
+ \dfrac{c}{\varepsilon}|\lambda|\|U\|_\H\sqrt{\|U\|_\H \|\widehat{U}\|_\H}
+\dfrac{c}{\varepsilon}\|U\|_\H \|\widehat{U}\|_\H
$$
for every $\eps\in(0,1)$, where
$c>0$ is independent of $\lambda$ and $\eps$.
Applying Lemma \ref{lemma-varphi_x+psi}, we arrive at
\begin{align*}
k\|\varphi_x+ \psi\|^2 + b \|\psi_x\|^2 &\leq c\eps\bigg[\frac{1}{|\lambda|}+1\bigg]\|U\|_\H^2
+\frac{c}{\eps}\bigg[\frac{1}{|\lambda|}+1\bigg]\|U\|_\H \sqrt{\|U\|_\H \|\widehat{U}\|_\H}\\
&\quad +\frac{c}{\eps}\bigg[\frac{1}{|\lambda|^2}+1\bigg]\|U\|_\H\|\widehat U\|_\H\\
&\leq c\eps\bigg[\frac{1}{|\lambda|}+1\bigg]\|U\|_\H^2 +
\frac{c}{\eps^3}\bigg[\frac{1}{|\lambda|^2}+1\bigg]\|U\|_\H\|\widehat U\|_\H
\end{align*}
for every $\lambda\neq0$ and every $\eps\in(0,1)$. The thesis has been proved.
\end{proof}

\subsection{Proof of Theorem \ref{EXP-STAB-TEO} (completion)}
In the light of the Gearhart-Pr\"{u}ss-Huang theorem \cite{gearhart,huang,pruss}
(see also \cite{Zhuangyi.Liu.book}),
the conclusion of Theorem \ref{EXP-STAB-TEO} follows provided that $\i\R$
is contained into the resolvent set $\rho(\cal{A})$ of $\cal{A}$
and
\begin{equation}
\label{GPcond}
\limsup_{|\lambda|\to\infty}\|(\i\lambda  - \mathcal{A})^{-1}\|_{\mathcal{L}(\mathcal{H})}<\infty.
\end{equation}
In the next two propositions we verify these conditions.

\begin{prop}\label{ir-resolv}
The inclusion $\i\mathbb{R}\subset\rho(\cal{A})$ holds.
\end{prop}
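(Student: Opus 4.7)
The plan is to show that $E := \{\lambda \in \mathbb{R} : \mathrm{i}\lambda \in \rho(\mathcal{A})\}$ equals all of $\mathbb{R}$. Since the resolvent set of a closed operator is open, $E$ is automatically open; by connectedness of $\mathbb{R}$ it is then enough to establish that $E$ is nonempty and (relatively) closed.

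I would start by showing that $0 \in E$, i.e.\ that $\mathcal{A}$ is boundedly invertible. The argument parallels the one alluded to in Section \ref{semsec} for the surjectivity of $I-\mathcal{A}$. Given $\widehat U \in \H$, the first and third components of $-\mathcal{A} U = \widehat U$ immediately yield $\Phi = -\hat{\varphi}$ and $\Psi = -\hat{\psi}$; the transport equations for $\eta$ and $\zeta$ integrate explicitly in $s$, the exponential decay of $\mu$ and $\nu$ guaranteed by \eqref{assdafmu}--\eqref{assdafnu} ensuring that the resulting memory integrals (in particular $\int_0^\infty s\mu(s)\d s$ and $\int_0^\infty s\nu(s)\d s$) are finite. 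Substituting back, the remaining four equations collapse into a coupled elliptic system in $(\varphi,\psi,\theta,\xi)$ supplemented with the appropriate boundary conditions, which can be solved by a standard Lax--Milgram argument, delivering a solution $U \in \mathcal{D}(\mathcal{A})$ together with the continuity estimate $\|U\|_\H \leq C\|\widehat U\|_\H$.

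For the closedness of $E$ I would proceed by contradiction. Suppose there exists $\lambda_* \in \mathbb{R}$ with $\mathrm{i}\lambda_* \in \sigma(\mathcal{A})$: since $E$ is open and $0 \in E$, necessarily $\lambda_* \neq 0$, while continuity of the resolvent map on $\rho(\mathcal{A})$ forces $\|(\mathrm{i}\lambda_n - \mathcal{A})^{-1}\|_{\mathcal{L}(\mathcal{H})} \to \infty$ along any sequence $E \ni \lambda_n \to \lambda_*$. A normalization then produces $U_n \in \mathcal{D}(\mathcal{A})$ with $\|U_n\|_\H = 1$ and $\widehat U_n := (\mathrm{i}\lambda_n - \mathcal{A}) U_n \to 0$ in $\H$. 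I would then chain the lemmas of this section with $\lambda = \lambda_n$ and $\widehat U = \widehat U_n$, each one dispatching a different component of $U_n$: Lemma \ref{ETA+ZETA} yields $\|\eta_n\|_\M + \|\zeta_n\|_\N \to 0$; Lemma \ref{THETA+XI} (letting $\varepsilon\to 0$ after passing to the limit $n\to\infty$) yields $\|\theta_n\|+\|\xi_n\|\to 0$; the boundedness of $|\lambda_n|$ combined with Lemma \ref{Theta-aux} gives $\|\theta_{nx}\|+\|\xi_{nx}\|\to 0$; since $|\lambda_n|\to|\lambda_*|>0$, Lemma \ref{corol-varphix-psi-final} gives $\|\varphi_{nx}+\psi_n\|+\|\psi_{nx}\|\to 0$; and finally Lemma \ref{lemma-Phi} gives $\|\Phi_n\|+\|\Psi_n\|\to 0$. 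Summing all these contributions we obtain $\|U_n\|_\H \to 0$, contradicting the normalization.

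The main obstacle I anticipate is the nonemptiness step: invertibility of $\mathcal{A}$ at $\lambda = 0$ requires some care with the history variables, as the integrated form of the transport equations must be compatible with the domain constraint $\lim_{s\to 0}\|\eta_x(s)\| = 0$ (and its $\zeta$-analogue), and the coercivity of the reduced elliptic system has to be verified separately for the mixed boundary conditions \eqref{BCM-beg} and the full Dirichlet ones \eqref{BCD-beg} (the latter being slightly more delicate since the zero-average constraints on $\psi,\theta,\xi$ are no longer available). Once $0 \in \rho(\mathcal{A})$ is secured, the closedness argument above is essentially a mechanical cascade of the already-proved lemmas, which is the reason the whole scheme works.
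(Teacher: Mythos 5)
Your argument is correct in substance, but it reshuffles the work compared with the paper's proof, and the place where it differs is exactly where the new difficulty is hidden. The paper does not use the open--closed--connected continuation at all: since $\mathcal{A}$ generates a contraction semigroup, every point of $\sigma(\mathcal{A})\cap\i\R$ is automatically an \emph{approximate eigenvalue} (this is \cite[Proposition B.2]{Arent-Batty-Hieber}), so the normalized sequence $U_n$ with $\i\lambda_0 U_n-\mathcal{A}U_n\to0$ is obtained directly at a single fixed $\lambda_0$, with no need to first prove $0\in\rho(\mathcal{A})$ and no need for a sequence $\lambda_n\to\lambda_*$. From that point on, your cascade of Lemmas \ref{ETA+ZETA}, \ref{THETA+XI}, \ref{Theta-aux}, \ref{corol-varphix-psi-final} and \ref{lemma-Phi} is exactly the paper's computation (your ``let $n\to\infty$, then $\eps\to0$'' plays the role of the paper's choice of $\eps=\eps(\lambda_0)$ small), and it remains valid along $\lambda_n\to\lambda_*\neq0$ because the $|\lambda|$-dependent factors stay bounded. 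The price of your route is the nonemptiness step: proving $0\in\rho(\mathcal{A})$ by direct inversion is genuinely delicate in the history framework, and you have only sketched it. In particular, solving $\eta'=\theta+\hat\eta$ gives $\eta(s)=s\,\theta+\int_0^s\hat\eta(r)\,\d r$, and one must check that the antiderivative operator $\hat\eta\mapsto\int_0^{\cdot}\hat\eta(r)\,\d r$ is bounded on $\M$; this is true, but it uses \eqref{assdafmu} through a Young-type convolution estimate (namely $\sqrt{\mu(s)}\leq\sqrt{\mu(r)}\,\e^{-\delta_1(s-r)/2}$ for $s\geq r$), not merely the finiteness of $\int_0^\infty s\mu(s)\,\d s$; moreover the moment actually needed for the term $s\,\theta$ is $\int_0^\infty s^2\mu(s)\,\d s<\infty$, and under \eqref{BCM-beg} the reduced Neumann problem for $\theta$ requires a mean-zero compatibility check. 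The paper sidesteps all of this by treating $\lambda_0=0$ with the same approximate-eigenvalue scheme and deferring the details to \cite[Theorem 7.10]{dellobressetim}. Finally, a small logical point: for your closedness step you should take $\lambda_*$ to be a point of $\overline{E}\setminus E$, which is what guarantees the existence of the sequence $E\ni\lambda_n\to\lambda_*$; an arbitrary spectral point need not be approachable from $E$, although this costs nothing once the connectedness argument is set up properly.
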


\begin{proof}
Let us assume by contradiction that $\i\lambda_0\notin \rho(\cal{A})$ for some $\lambda_0\in \mathbb{R}$.
Since $\mathcal{A}$ generates a contraction semigroup, then $\i\lambda_0$ is
necessarily an approximate eigenvalue (see e.g.\ \cite[Proposition B.2]{Arent-Batty-Hieber}).
This amounts to saying that there exists
$U_n=(\varphi_n, \Phi_n, \psi_n, \Phi_n, \theta_n,\eta_n, \xi_n, \zeta_n)\in \D(\mathcal{A})$ satisfying
\begin{equation}\label{ir-resolv-eq1}
\|U_n\|_\H=1 \qquad \mbox{and} \qquad \i\lambda_0 U_n - \mathcal{A}U_n
= \widehat{U}_n\rightarrow 0 \;\, \mbox{in}\;\, \H.
\end{equation}
We limit ourself to consider the case
$\lambda_0\ne 0$.
When $\lambda_0=0$ the argument
can be carried out arguing similarly as in the proof of \cite[Theorem 7.10]{dellobressetim}
and the details are left to the reader.

Using Lemmas \ref{ETA+ZETA}, \ref{THETA+XI}, \ref{lemma-Phi}
and \ref{corol-varphix-psi-final}, we estimate
\begin{align*}
\|U_n\|_\H^2 &\leq c\eps\bigg[\frac{1}{|\lambda_0|}+1\bigg]\|U_n\|_\H^2
+ c\|U_n\|_\H\big[\|\varphi_{nx}+\psi_n\|+\|\psi_{nx}\|\big]
+\frac{c}{\eps^3}\bigg[\frac{1}{|\lambda_0|^2}+1\bigg]\|U_n\|_\H\|\widehat U_n\|_\H\\\noalign{\vskip0.7mm}
&\leq c\eps\bigg[\frac{1}{|\lambda_0|}+1\bigg]\|U_n\|_\H^2
+\frac{c}{\eps^7}\bigg[\frac{1}{|\lambda_0|^2}+1\bigg]\|U_n\|_\H\|\widehat U_n\|_\H,
\end{align*}
for every $\varepsilon \in (0,1)$ and some $c>0$ independent of $\eps$ and $\lambda_0$.
Fixing now $\varepsilon= \varepsilon(\lambda_0)\in(0,1)$ small enough that
\begin{equation*}
\varepsilon< \dfrac{1}{2c}\bigg[\frac{1}{|\lambda_0|}+1\bigg]^{-1},
\end{equation*}	
there exists a constant $K= K(\lambda_0)> 0$ such that $
\|U_n\|_{\H}\leq K\|\widehat{U}_n\|_{\H}$, contradicting
\eqref{ir-resolv-eq1}.
\end{proof}

\begin{prop}
Condition \eqref{GPcond} holds.
\end{prop}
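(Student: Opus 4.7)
The plan is to argue by contradiction, mimicking the structure of the proof of Proposition \ref{ir-resolv}, and to exploit the fact that in the regime $|\lambda|\to\infty$ the $\lambda$-dependent factors appearing in Lemmas \ref{ETA+ZETA}, \ref{THETA+XI}, \ref{lemma-Phi} and \ref{corol-varphix-psi-final} become uniformly bounded. This is what allows us to choose the absorption parameter $\eps$ independently of $\lambda$, in contrast to Proposition \ref{ir-resolv} where $\eps$ was tuned to a fixed $\lambda_0$.

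More precisely, if \eqref{GPcond} were to fail, a standard reduction (using that $\i\R \subset \rho(\mathcal{A})$ by Proposition \ref{ir-resolv}, so the resolvent is well-defined on $\i\R$) yields sequences $\lambda_n\in\R$ with $|\lambda_n|\to\infty$ and vectors $U_n\in\D(\mathcal{A})$, $\widehat U_n\in\mathcal{H}$ such that
$$
\|U_n\|_\H =1,\qquad \i\lambda_n U_n-\mathcal{A}U_n=\widehat U_n\to 0\ \text{in}\ \H.
$$
Applying Lemma \ref{lemma-Phi} to control the kinetic terms and Lemma \ref{corol-varphix-psi-final} to control $\|\varphi_{nx}+\psi_n\|$ and $\|\psi_{nx}\|$, and invoking Lemma \ref{THETA+XI} for $\|\theta_n\|,\|\xi_n\|$ and Lemma \ref{ETA+ZETA} for the memory variables, exactly as in Proposition \ref{ir-resolv} we obtain
$$
\|U_n\|_\H^2 \leq c\eps\bigg[\frac{1}{|\lambda_n|}+1\bigg]\|U_n\|_\H^2 + \frac{c}{\eps^7}\bigg[\frac{1}{|\lambda_n|^2}+1\bigg]\|U_n\|_\H\|\widehat U_n\|_\H
$$
for every $\eps\in(0,1)$, with a constant $c>0$ independent of $n$ and $\eps$.

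Now comes the punchline. Since $|\lambda_n|\to\infty$, for all sufficiently large $n$ we have $[1/|\lambda_n|+1]\leq 2$ and $[1/|\lambda_n|^2+1]\leq 2$. Therefore
$$
\|U_n\|_\H^2 \leq 2c\eps\, \|U_n\|_\H^2 + \frac{2c}{\eps^7}\|U_n\|_\H\|\widehat U_n\|_\H.
$$
Selecting $\eps\in(0,1)$ (independent of $n$) small enough that $2c\eps\leq 1/2$, we absorb the first term on the right into the left-hand side and deduce the bound
$$
\|U_n\|_\H \leq K\|\widehat U_n\|_\H
$$
for some constant $K>0$ independent of $n$. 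Combined with $\widehat U_n\to 0$ in $\mathcal H$ this yields $\|U_n\|_\H\to 0$, contradicting $\|U_n\|_\H=1$. Hence \eqref{GPcond} must hold.

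The step I expect to be most delicate is the reduction of the failure of \eqref{GPcond} to a sequence of approximate solutions of the resolvent equation with normalized $\|U_n\|_\H=1$ and $\widehat U_n\to 0$; once this is in place, the rest of the argument is a clean combination of the already established resolvent estimates, with the essential gain (compared to Proposition \ref{ir-resolv}) being the uniform boundedness of the $\lambda_n$-dependent factors in the high-frequency regime.
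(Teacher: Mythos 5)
Your proposal is correct and uses essentially the same argument as the paper: the identical combination of Lemmas \ref{ETA+ZETA}, \ref{THETA+XI}, \ref{lemma-Phi} and \ref{corol-varphix-psi-final}, followed by absorbing $c\eps\|U\|_\H^2$ with an $\eps$ chosen uniformly in $\lambda$ thanks to the boundedness of the factors $[1/|\lambda|+1]$ and $[1/|\lambda|^2+1]$ for large $|\lambda|$. The only difference is cosmetic: the paper proves the uniform bound $\|U\|_\H\leq c\|\widehat U\|_\H$ directly for all $|\lambda|>1$, whereas you wrap the same estimates in a (valid, standard) contradiction argument with normalized sequences.
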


\begin{proof}
It is sufficient to show that, for every $|\lambda|>1$, the inequality
\begin{equation}
\label{boundexp}
\|U\|_\H\leq c \|\widehat{U}\|_\H
\end{equation}
holds for some structural constant $c>0$ independent of $\lambda$. Once this bound
has been established, Proposition \ref{ir-resolv} ensures that
$
\|(\i\lambda - \mathcal{A})^{-1}\|_{\mathcal{L}(\mathcal{H})}\leq c
$
for every $|\lambda|>1$, and the latter yields \eqref{GPcond}.

In order to prove \eqref{boundexp} we first notice that
for $|\lambda|>1$ the conclusion of Lemma \ref{corol-varphix-psi-final}
becomes
$$
k\|\varphi_x+ \psi\|^2 + b \|\psi_x\|^2 \leq c\varepsilon\|U\|_\H^2
+\frac{c}{\eps^3}\|U\|_\H\|\widehat U\|_\H.
$$
Combining the estimate above with Lemmas \ref{ETA+ZETA}, \ref{THETA+XI} and \ref{lemma-Phi},
for every $\varepsilon\in (0,1)$ we have
\begin{align*}
\|U\|_\H^2 \leq c\eps\|U\|_\H^2
+c\|U\|_\H\big[\|\varphi_x+\psi\|+\|\psi_x\|\big]+\dfrac{c}{\varepsilon^3}\|U\|_\H \|\widehat{U}\|_\H
\leq c\eps\|U\|_\H^2 + \dfrac{c}{\varepsilon^7}\|U\|_\H \|\widehat{U}\|_\H
\end{align*}
with $c>0$ independent of $\lambda$ and $\eps$.
Fixing $\varepsilon\in (0,1)$ small enough that $c\eps\leq\frac12$, we reach \eqref{boundexp}.
\end{proof}

\section{Concluding Remarks}
\label{finsec}

\medskip
\noindent
{\bf I.}  As already mentioned in the Introduction, the Gurtin-Pipkin law is
more general than the Cattaneo
one. Indeed, choosing for $\tau,\varsigma>0$
\begin{equation}
\label{choice}
g(s)=g_\tau(s) = \frac{1}{\tau}\e^{-\frac{s}{\tau}}\qquad \text{and} \qquad
h(s)= h_\varsigma(s)=\frac{1}{\varsigma}\e^{-\frac{s}{\varsigma}},
\end{equation}
and defining the heat-flux variables
\begin{align*}
&q(x,t) = - \varpi_1 \int_0^\infty g_\tau(s) \theta_x(x,t-s) \d s,\\\noalign{\vskip0.7mm}
&p(x,t) = - \varpi_2 \int_0^\infty h_\varsigma(s) \xi_x(x,t-s) \d s,
\end{align*}
by means of an elementary calculation one can see that $q$ and $p$ satisfy \eqref{catta}
provided that $\theta$ and $\xi$ satisfy \eqref{GP}. This correspondence is not merely formal
and indeed, arguing as in \cite[Section 8]{delloro-pata},
it is possible to show rigorously that the semigroup $S(t)$
generated by the Timoshenko-Gurtin-Pipkin system
corresponding to the particular choice \eqref{choice} is
exponentially stable if and only if the same does the semigroup generated by the
Timoshenko-Cattaneo system.

\medskip
\noindent
{\bf II.} Also the Fourier law can be recovered from the Gurtin-Pipkin one by means of a singular
limit procedure. To see that, we consider for $\eps>0$ the rescaled kernels
$$
g_\eps (s) = \frac{1}{\eps} g \left(\frac{s}{\eps} \right) \qquad \text{and}\qquad
h_\eps (s) = \frac{1}{\eps} h \left(\frac{s}{\eps} \right)
$$
which converge in the distributional sense to the Dirac mass $\delta_0$ as $\eps\to0$. In this way,
system~\eqref{GP} boils down to \eqref{sysfou} in the (singular) limit $\eps\to0$
(see \cite{AMNESIA,delloro-pata} for more details).
Using a similar procedure
it is also possible to recover the Timoshenko-Coleman-Gurtin system, which consists
in replacing \eqref{GP} with the equations (see \cite{CGu})
\begin{equation}\label{COLGUR}
\left\{\begin{array}{lcl}
\displaystyle\rho_3 \theta_{t} - \varpi_1 (1-\ell)\theta_{xx}
-\varpi_1 \ell \int_{0}^{\infty} g(s) \theta_{xx}(t-s) \mathrm{d} s
+ \gamma (\varphi_{x} + \psi)_t = 0,\medskip\\
\displaystyle	\rho_4 \xi_{t} - \varpi_2(1-\ell)\xi_{xx} -
\varpi_2 \ell\int_{0}^{\infty} h(s) \xi_{xx}(t-s) \mathrm{d} s + \sigma \psi_{xt} = 0.
\end{array} \right.
\end{equation}
Here, $\ell \in (0,1)$ is a fixed parameter and
the limit cases $\ell=0,1$ correspond to the Fourier and the Gurtin-Pipkin models,
respectively. Considering for $\eps>0$ the rescaled kernels
$$
g_{\eps} (s) = \frac{1-\ell}{\eps} g \left(\frac{s}{\eps} \right) + \ell g(s)\qquad \text{and} \qquad
h_{\eps} (s) = \frac{1-\ell}{\eps} h \left(\frac{s}{\eps} \right) + \ell h(s),
$$
we have the convergence
$g_\eps\to (1-\ell)\delta_0 + \ell g$ and $h_\eps\to(1-\ell)\delta_0 + \ell h$
in the distributional sense for $\eps\to0$, and thus
system~\eqref{GP} with the choice $g=g_\eps$ and $h=h_\eps$ boils down to \eqref{COLGUR}.

\medskip
\noindent
{\bf III.} Although so far we have assumed
that the temperatures fulfill the same constitutive law, it is
possible to analyze Timoshenko systems where $\theta$ and $\xi$ obey
different laws.
In order to illustrate all the possible cases that can be covered,
let us use the following abbreviations:
\begin{itemize}
	\item Gurtin-Pipkin {\bf (GP)};
	\item Fourier {\bf (F)};
	\item  Cattaneo {\bf (C)};
	\item Coleman-Gurtin {\bf (CG)}.
\end{itemize}
All the models listed in the table below are either a particular instance of
system \eqref{tim4-beg}
or else can be recovered from it by means of appropriate singular limit procedures.
The corresponding solutions semigroups are exponentially stable
independently of the values of the structural parameters.

\begin{table}[h]

 \small
\centering
\begin{tabular}{c|c }
Coupling on shear force ($\theta$) &  Coupling on bending moment ($\xi$)\\
\hline
  {\bf (GP)} &  {\bf (GP)} (our problem)
        \\
  {\bf (GP)} & {\bf (F)}  \\
  {\bf (GP)} & {\bf (C)}            \\
  {\bf (GP)} & {\bf (CG)}          \\ \hline
	{\bf (F)} &  {\bf (GP)} \\
	{\bf (F)}&  {\bf (F)}   (problem in \cite{alvesetal-JEE}) \\
	{\bf (F)} &  {\bf (C)} \\
	{\bf (F)} &  {\bf (CG)} \\ \hline
	{\bf (C)} &  {\bf (GP)} \\
{\bf (C)} & {\bf (F)} \\
{\bf (C)} &  {\bf (C)} (problem in \cite{DJ})\\
{\bf (C)} & {\bf (CG)}  \\ \hline
	{\bf (CG)} &  {\bf (GP)} \\
{\bf (CG)} &  {\bf (F)}  \\
{\bf (CG)} & {\bf (C)} \\
{\bf (CG)} & {\bf (CG)}   	
	\end{tabular}
\end{table}

\smallskip
\noindent
{\bf IV.} Finally, we mention that the analysis carried out in this work
can be adapted also to different boundary conditions.
For instance, one can assume the
mixed Neumann-Dirichlet boundary conditions considered in \cite{DJ}
$$
\varphi_x(0,t) = \varphi_x(L,t) = \psi(0,t) = \psi(L,t) = \theta(0,t) = \theta(L,t) = \xi(0,t) = \xi(L,t)=0,
$$
or any of the boundary conditions considered in \cite{alvesetal-JEE}.
Clearly, appropriate modifications and precise computations must be done, but no substantial challenges arise.


\end{document}